\theoremstyle{plain}
    \newtheorem{theorem}{Theorem}[section]
    \newtheorem{lemma}[theorem]{Lemma}
    \newtheorem{proposition}[theorem]{Proposition}
    \newtheorem{corollary}[theorem]{Corollary}
\theoremstyle{definition}
    \newtheorem{remark}{Remark}[section]
    \newtheorem*{acknowledgement}{Acknowledgement}
\theoremstyle{remark}
\numberwithin{equation}{section}
\newcommand{\cleq}{\lesssim}
\def\norm#1{\left\Vert #1 \right\Vert} %norm
\DeclareMathOperator{\re}{Re}
\begin{document}
%%%%%%%%%%%%%%%%%%%%%%%%%%%%%%%%%%%%%%%%%%%%%%%%%%%%%%%%

\title[Asymptotic order for linear wave eq. with scale-invariant damping]{Remarks on asymptotic order for the linear wave equation with the scale-invariant damping and mass with $L^r$-data}
\author[T. Inui]{Takahisa Inui}
\address{Department of Mathematics, Graduate School of Science, Osaka University, Toyonaka, Osaka 560-0043, Japan}
\email{inui@math.sci.osaka-u.ac.jp}
\author[H. Mizutani]{Haruya Mizutani}
\address{Department of Mathematics, Graduate School of Science, Osaka University, Toyonaka, Osaka 560-0043, Japan}
\email{haruya@math.sci.osaka-u.ac.jp}
\date{\today}
\keywords{wave equation, scaling invariant damping, scattering, asymptotic order, $L^r$-data, $\dot{H}^{-\gamma}$-data}
\subjclass[2010]{35L05, 35B40, 47A40, etc.}
\maketitle

\begin{abstract}
%In our previous paper, we considered the linear wave equation with the scale-invariant damping and mass,  showed scattering, and obtained its asymptotic order for the data in the energy space $H^1 \times L^2$. 
%\\
%In the present paper, 
%we consider scattering for %improve the asymptotic order for 
%the initial data in the energy space and $L^r$ where $1\leq r <2$. Then, by the decaying of the data, we obtain scattering in the case which can not be treated in the previous paper.  
%\\
%Moreover, we give a small improvement of the asymptotic order obtained in our previous paper for $H^1 \times L^2$-data by the density and the argument for $L^r$-data. 
%
%
%D'Abbicco  the wave equation with the scale-invariant damping 
%%%%%%%In our previous paper, we considered the linear wave equation with the scale-invariant damping and mass $\partial_t^2 u - \Delta u +\frac{\mu_1}{1+t} \partial_t u + \frac{\mu_2}{(1+t)^2} u =0$ on $\mathbb{R}^d$ with the initial data in the energy space $H^1\times L^2$. In the present paper, we consider scattering for the initial data in the energy space and $L^r$ where $1\leq r <2$. Then, we obtain the scattering result for a wider range of $\mu_1$ and $\mu_2$, including the case $\mu_1<d+2$, $\mu_2=0$, and $r =1$, than the range in the previous paper. Moreover, we give an improvement of the asymptotic order obtained in our previous paper for $H^1 \times L^2$-data. 
%by the density and the argument for $L^r$-data. 
%Especially we show scattering when $\mu_1 <d+2$ and $\mu_2=0$ for $L^1$-initial data.
In the present paper, we consider the linear wave equation with the scale-invariant damping and mass. It is known that the global behavior of the solution depends on the size of the coefficients in front of the damping and mass at initial time $t=0$. Indeed, the solution satisfies the similar decay estimate to that of the corresponding heat equation if it is large and to that of the modified wave equation if it is small. In our previous paper, we obtain the scattering result and its asymptotic order for the data in the energy space $H^1\times L^2$ when the coefficients are in the wave regime. In fact, the threshold of the coefficients relies on the spatial decay of the initial data. Namely, it varies depending on $r$ when the initial data is in $L^r$ ($1\leq r < 2$). In the present paper, we will show the scattering result and the asymptotic order in the wave regime for $L^r$-data, which is wider than the wave regime for the data  in the energy space. 
Moreover, we give an improvement of the asymptotic order obtained in our previous paper for the data  in the energy space.
%In our previous paper, we obtain the scattering result of the solutions to the linear wave equation with the scale-invariant damping and mass with the initial data in the energy space when the coefficient is in the wave regime.  
\end{abstract}

\tableofcontents
%%%%%%%%%%%%%%%%%%%%%%%%%%%%%%%%%%%%%%%%%%%%%%%%%%%%%%%%
%%%%%%%%%%%%%%%%%%%%%%%%%%%%%%%%%%%%%%%%%%%%%%%%%%%%%%%%

\section{Introduction}

\subsection{Motivation}
We consider the linear wave equation with the scale-invariant damping and mass:
\begin{align}
\label{DW}
\tag{DW}
	\begin{cases}
	\displaystyle \partial_t^2 u - \Delta u +\frac{\mu_1}{1+t} \partial_t u + \frac{\mu_2}{(1+t)^2} u =0, & (t, x) \in (0,\infty) \times \mathbb{R}^d,
	\\
	(u(0), \partial_t u(0)) = (u_0, u_1), &  x \in \mathbb{R}^d,
	\end{cases}
\end{align}
where $\mu_1,\mu_2 \in \mathbb{R}$, $d\in \mathbb{N}$, and $(u_0, u_1) \in H^1(\mathbb{R}^d)\times L^2(\mathbb{R}^d)$.

Before considering our linear equation, we consider the following wave equation with time-dependent damping:
\begin{align}
\label{eq1.0}
	\partial_t^2 w -\Delta w +\frac{b_0}{(1+t)^{\beta}} \partial_t w = 0,
	\quad (t,x) \in (0,T) \times \mathbb{R}^d,
\end{align}
where $b_0>0$ and $\beta \in \mathbb{R}$. 
Wirth \cite{Wir04,Wir06,Wir07_1} classify it into four cases from the viewpoint of the global behavior of the solutions. % of the following wave equation with time-dependent damping.
The equation is called overdamping when $\beta <-1$. In this case, the solution does not decay to zero as $t \to \infty$. 
When $-1 \leq \beta <1$, the damping term is called effective. The solution in this case behaves like one of the heat equation $\frac{b_0}{(1+t)^{\beta}} \partial_t h-\Delta h  = 0$. If $\beta >1$, then it is known that the solution scatters to one of the free wave equation. Then, we say that we have scattering. If $\beta =1$, then the equation is invariant under the scaling
\begin{align*}
	w^{\sigma}(t,x):= w( \sigma (1+t)-1,\sigma x) \text{ for } \sigma>0.
\end{align*}
Namely, $w^{\sigma}$ is a solution if $w$ is a solution. Thus, this case is said to be scale-invariant. %Indeed, it will depend on the strength of the coupling constant $b_0$. %between the effective case and the scattering case. 
\begin{table}[htb]
  \begin{tabular}{|l|l|} \hline
    $\beta\in (-\infty,-1)$ & overdamping  \\ \hline
    $\beta\in [-1,1)$ & effective  \\ \hline
    $\beta =1$ & scale-invariant  \\ \hline
    $\beta\in (1,\infty)$ & scattering  \\ \hline
  \end{tabular}
\end{table}

In the scale-invariant case, i.e. $\beta=1$, Wirth \cite{Wir04} showed that the solution satisfies the following $L^p$-$L^q$ estimates:
\begin{align*}
	\|w(t)\|_{L^q}
	\lesssim 
	\begin{cases}
	(1+t)^{\max\{ -\frac{d-1}{2}\left(\frac{1}{p}-\frac{1}{q}\right)-\frac{b_0}{2}, -d\left(\frac{1}{p}-\frac{1}{q}\right)+1-b_0\}}, 
	&\text{ if } b_0\in (0,1),
	\\
	(1+t)^{\max\{ -\frac{d-1}{2}\left(\frac{1}{p}-\frac{1}{q}\right)-\frac{b_0}{2},-d\left(\frac{1}{p}-\frac{1}{q}\right)\}},
	&\text{ if } b_0\in (1,\infty),
	\end{cases} 
%	\text{ if } b_0\in (0,1),
%	\\
%	\|w(t)\|_{L^q}&\lesssim (1+t)^{\max\{ -\frac{n-1}{2}\left(\frac{1}{p}-\frac{1}{q}\right)-\frac{b_0}{2},
%		-n\left(\frac{1}{p}-\frac{1}{q}\right)\}}(\|w_{l,0}\|_{H^s_p}+\|w_{l,1}\|_{H_p^{s-1}}),
%	\|(\partial_t w, \nabla w)(t)\|_{L^q}&\lesssim (1+t)^{\max\{ -\frac{n-1}{2}\left(\frac{1}{p}-\frac{1}{q}\right)-\frac{b_0}{2},
%		-n\left(\frac{1}{p}-\frac{1}{q}\right)-1\}}(\|w_{l,0}\|_{H^{s+1}_p}+\|w_{l,1}\|_{H_p^{s}}),
\end{align*}
where $1<p\le 2$, $1/p+1/q=1$, the implicit constant depends on $\|w_{l,0}\|_{H^s_p}+\|w_{l,1}\|_{H_p^{s-1}}$, and  $s=d(1/p-1/q)$.
This result means that the solution behaves like that of the corresponding heat equation 
$\frac{b_0}{1+t} \partial_t h-\Delta h = 0$ if $b_0$ is sufficiently large and  the solution behaves like that of the wave equation if $b_0$ is sufficiently small. 
Therefore, the scale-invariant case is critical in the sense of the global behavior of the solutions. Especially, the constant $b_0$ plays an important role to determine the global behavior of the solutions.% unlike the effective and scattering case. 

In the previous paper \cite{InMi20}, we considered the linear wave equation with the scale-invariant damping and mass \eqref{DW}, showed the scattering result (see also \cite{Wir06,Wir07_2,NaWi15}), and obtained the asymptotic order. Namely, there exists
$\overrightarrow{v_{+}} \in \dot{H}^1(\mathbb{R}^d) \times L^2(\mathbb{R}^d)$ such that
\begin{align*}
	&\norm{\overrightarrow{u}(t) -(1+t)^{-\frac{\mu_1}{2}} \mathcal{W}(t)\overrightarrow{v_{+}} }_{\dot{H}^1\times L^2}
	\\
	&\cleq
	(\mu+|\mu_1|)(\norm{u_0}_{L^2}+\norm{u_1}_{L^2})
	\begin{cases}
	(1+t)^{-\frac{1}{2} + \re\nu - \frac{\mu_1}{2}}, & \mu \neq 1/4,
	\\
	(1+t)^{-\frac{1}{2} - \frac{\mu_1}{2}}\{1+\log(1+t)\}, & \mu =1/4,
	\end{cases}
\end{align*}
provided that $\mu \geq 0$, where $\mathcal{W}(t)$ is the solution map of the free wave equation, $\overrightarrow{u}(t)=(u(t),\partial_t u(t))$, 
\begin{align}
\label{eq1.1}
	\mu:=\frac{\mu_1(2-\mu_1)}{4} +\mu_2, \quad  \text{ and }\quad  \nu:=
	\begin{cases}
	\frac{1}{2}\sqrt{1-4\mu} & (\mu\leq 1/4)
	\\
	\frac{i}{2}\sqrt{4\mu-1} & (\mu > 1/4)
	\end{cases}.
\end{align}
%provided that $\mu \geq0$. 
%By the so-called Liouville transform $v := (1+t)^{\frac{\mu_1}{2}} u$, we obtain the following Klein-Gordon equation with scale-invariant mass.
%\begin{align}
%\label{KG}
%\tag{KG}
%	\begin{cases}
%	\displaystyle \partial_t^2 v - \Delta v +\frac{\mu}{(1+t)^2}v =0, & (t, x) \in (0,\infty) \times \mathbb{R}^d,
%	\\
%	(v(0), \partial_t v(0)) = (v_0, v_1), &  x \in \mathbb{R}^d,
%	\end{cases}
%\end{align}
%where we set $(v_0,v_1):=(u_0, \mu_1u_0/2+u_1)$ and 
%\begin{align*}
%	\mu:=\frac{\mu_1(2-\mu_1)}{4} +\mu_2.
%\end{align*}
%In \cite{InMi20}, we showed the scattering result and obtained asymptotic order. Namely, there exists $\overrightarrow{v_{+}} \in \dot{H}^1(\mathbb{R}^d) \times L^2(\mathbb{R}^d)$ such that
%\begin{align*}
%	\norm{\overrightarrow{v}(t) - \mathcal{W}(t)\overrightarrow{v_{+}}}_{\dot{H}^1 \times L^2}
%	\cleq \mu (\norm{v_0}_{L^2}+\norm{v_1}_{L^2})
%	\begin{cases}
%	(1+t)^{-\frac{1}{2} + \re\nu} & \mu \neq 1/4,
%	\\
%	(1+t)^{-\frac{1}{2}} \log(1+t) & \mu =1/4,
%	\end{cases}
%\end{align*}
%where we set  $\overrightarrow{v}(t)=(v(t),\partial_t v(t))$ and  $\nu:=\frac{1}{2}\sqrt{1-4\mu}$ when $\mu \leq 1/4$ and $\nu:=\frac{i}{2}\sqrt{4\mu-1}$ when $\mu > 1/4$. 
%This result also gives the scattering and the asymptotic order for the solution $u$ to \eqref{DW} by retransforming $u := (1+t)^{-\frac{\mu_1}{2}} v$ (see also \cite{Wir06,Wir07_2}). 
To obtain this result, we used the so-called Liouville transform $v := (1+t)^{\frac{\mu_1}{2}} u$ and transformed \eqref{DW} into the Klein-Gordon equation with the scale-invariant mass. After that, we applied an $L^2$-estimate of the solution obtained by \cite{BoRe12}. 

The first aim of the present paper is to improve the asymptotic order by assuming that the initial data belongs to not only $H^1 \times L^2$ but also an additional function space. %$L^r(\mathbb{R}^d)$ for $1\leq r <2$.
Recently, do Nascimento, Palmieri, and Reissig \cite{NPR17} gave a better estimate with respect to time $t$
%improved the growth rate with respect to time $t$ 
of the $L^2$-norm of the solution than that in \cite{BoRe12} when the initial data belongs to $L^r(\mathbb{R}^d)$ for $1\leq r <2$. Thus, if the initial data belongs to $L^r(\mathbb{R}^d)$, then we expect to improve the asymptotic order combining our previous argument in \cite{InMi20} and the estimate in \cite{NPR17}. We will also get the scattering result in the case which was not treated in the previous paper  \cite{InMi20}. For example, we can get the scattering in the case of $\mu_1>2$ and $\mu_2=0$, which implies $\mu<0$. %, which is not treated in the previous paper \cite{InMi20}. 
%Moreover, we also consider the scattering and the asymptotic order when the initial data belongs to the homogeneous Sobolev space $\dot{H}^{-\gamma}$ for some $\gamma>0$. Under this assumption, Wirth \cite[Theorem 3.1]{Wir07_2} obtained the scattering result for \eqref{eq1.0} when $b_0>2$. Also in this case, we can improve the scattering result and the asymptotic order. 

The second aim in the present paper is to improve our previous asymptotic order for $H^1 \times L^2$-data. By the density, the initial data in $H^1 \times L^2$ can be approximated by an $L^r$-data. This density and the above argument for $L^r$-data imply an additional small order estimate. This is the so-called invisible estimate. The estimate looks like the invisible decay for the solution to the heat equation. Such estimate was obtained for \eqref{eq1.0} with $b_0>2$ by Hirosawa and Nakazawa \cite[Theorem 1.2]{HiNa03}. They showed  $\lim_{t\to \infty} (1+t)\| \overrightarrow{w}(t) \|_{\dot{H}^1 \times L^2} =0$ and $\lim_{t \to \infty} \|w(t)\|_{L^2} =0$ by an energy method. We consider \eqref{DW} and give an extension of their result by the density and the better estimate for $L^r$-data. This extension also gives an improvement of the asymptotic order which was obtained by our previous paper \cite{InMi20} for $H^1\times L^2$-data. 

\subsection{Main results}

%We set $p_1:=1+ 4/(d-2)$ and $\nu:=\frac{1}{2}\sqrt{1-4\mu}$ when $\mu \leq 1/4$ and $\nu:=\frac{i}{2}\sqrt{4\mu-1}$ when $\mu > 1/4$. 

As in the previous paper \cite{InMi20}, by the Liouville transfrom $v := (1+t)^{\frac{\mu_1}{2}} u$, we transform \eqref{DW} into  the following Klein-Gordon equation with the scale-invariant mass:
\begin{align}
\label{KG}
\tag{KG}
	\begin{cases}
	\displaystyle \partial_t^2 v - \Delta v +\frac{\mu}{(1+t)^2}v =0, & (t, x) \in (0,\infty) \times \mathbb{R}^d,
	\\
	(v(0), \partial_t v(0)) = (v_0, v_1), &  x \in \mathbb{R}^d,
	\end{cases}
\end{align}
where we set $(v_0,v_1):=(u_0, \mu_1u_0/2+u_1)$ and $\mu$ is in \eqref{eq1.1}. 
%\begin{align*}
%	\mu:=\frac{\mu_1(2-\mu_1)}{4} +\mu_2.
%\end{align*} 
%We denote the solution map from time $s$ to $t$ of \eqref{KG} by $\mathcal{E}_{0}(t,s)$ and $\mathcal{E}_{1}(t,s)$. See \eqref{eq2.3.0} below for detail. 
In what follows, we set $\overrightarrow{v}(t)=(v(t),\partial_t v(t))$, $\mathcal{W}(t)$ denotes the solution map of the free wave equation (see Section \ref{sec2} below), and $(\mathcal{W}(t)\overrightarrow{v_{+}})_{j}$ denotes the $j$-th component of $\mathcal{W}(t)\overrightarrow{v_{+}}$ for $j=1,2$. 

\subsubsection{Asymptotic order for $L^r$-data}

We define the function space $\Sigma^{r}$ %and $E^{(s)}$ 
by
\begin{align*}
	\Sigma^{r}&:=(H^1(\mathbb{R}^d) \cap L^r(\mathbb{R}^d)) \times( L^2(\mathbb{R}^d) \cap L^r(\mathbb{R}^d)),
%	\\
%	E^{(s)}&:= (H^1(\mathbb{R}^d) \cap \dot{H}^{-s} (\mathbb{R}^d)) \times (L^2(\mathbb{R}^d) \cap \dot{H}^{-s}(\mathbb{R}^d))
\end{align*}
for $1\leq r < 2$. %and $s \geq 0$. 
Moreover, we set
\begin{align*}
	\alpha&=\alpha(r,\mu):=\frac{1}{2} + \re\nu -\frac{d(2-r)}{2r},
%	\\
%	\gamma&= \gamma(\mu):=\frac{1}{2} + \re\nu,
	\\
	\delta & := 
	\begin{cases}
	1, & \mu =1/4,
	\\
	0, & \mu\neq 1/4,
	\end{cases}
\end{align*}
where $\nu$ is defined in \eqref{eq1.1}. 

%\subsubsection{Asymptotic order for $\Sigma^r$-data} % and for $E^{(\gamma)}$-data}

We obtain the following scattering and asymptotic order for the initial data in $\Sigma^{r}$. 

\begin{theorem}
\label{thm1.0}
Let $r \in [1,2)$ and $v$ be the solution of \eqref{KG}. Assume $(v_0,v_1)\in \Sigma^{r}$ and that $\mu$ and $r$ satisfy $\alpha<1$. 
Then, there exists $\overrightarrow{v_{+}} \in \dot{H}^1(\mathbb{R}^d) \times L^2(\mathbb{R}^d)$ such that
\begin{align*}
	&\norm{\overrightarrow{v}(t) - \mathcal{W}(t)\overrightarrow{v_{+}}}_{\dot{H}^1 \times L^2}
	\\
	&\cleq |\mu| \norm{ (v_0,v_1) }_{\Sigma^{r}}
	%\times
	\begin{cases}
	(1+t)^{-1}\{1+\log(1+t)\}^{\delta} & \text{ if } \alpha < 0, \text{ or } \alpha=0 \text{ and } r>1,
	\\
	%\int_{t}^{\infty} (1+s)^{-2}\{1+\log(1+s)\}^{\frac{2-r}{2r}+\delta} ds 
	(1+t)^{-1}\{1+\log(1+t)\}^{\frac{1}{2}+\delta}& \text{ if } \alpha = 0 \text{ and } r=1,
	\\
	(1+t)^{\alpha-1}\{1+\log(1+t)\}^{\delta} & \text{ if } \alpha \in( 0,1),
	\end{cases}
\end{align*}
%as $t \to \infty$ for arbitrary small $\varepsilon>0$, 
where the implicit constant does not depend on time and the initial data $(v_0,v_1)$.
\end{theorem}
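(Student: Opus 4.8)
The plan is to treat the scale-invariant mass $\mu(1+t)^{-2}v$ as a forcing term perturbing the free wave flow and to run a variation-of-constants (Duhamel) argument directly in the energy space $\dot H^1\times L^2$, so that the whole assertion collapses to a single $L^2$-bound on $v$. Writing \eqref{KG} as a first-order system for $\overrightarrow{v}=(v,\partial_t v)$ whose homogeneous flow is the free wave propagator $\mathcal{W}(t)$, Duhamel's formula reads
\[
	\overrightarrow{v}(t)=\mathcal{W}(t)\overrightarrow{v}(0)-\int_0^t\mathcal{W}(t-s)\begin{pmatrix}0\\[2pt]\frac{\mu}{(1+s)^2}v(s)\end{pmatrix}ds.
\]
Since $\mathcal{W}(t)$ acts as a unitary group on $\dot H^1\times L^2$, one first checks that $\int_0^\infty\mathcal{W}(-s)(0,\mu(1+s)^{-2}v(s))\,ds$ converges in $\dot H^1\times L^2$; granting this, I would set
\[
	\overrightarrow{v_{+}}:=\overrightarrow{v}(0)-\int_0^\infty\mathcal{W}(-s)\begin{pmatrix}0\\[2pt]\frac{\mu}{(1+s)^2}v(s)\end{pmatrix}ds,
\]
so that $\overrightarrow{v}(t)-\mathcal{W}(t)\overrightarrow{v_{+}}$ is exactly the corresponding tail integral $\int_t^\infty$. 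Using unitarity of $\mathcal{W}$ once more, the energy norm of the difference is controlled by the $L^2$-norm of the forcing, namely
\[
	\norm{\overrightarrow{v}(t)-\mathcal{W}(t)\overrightarrow{v_{+}}}_{\dot H^1\times L^2}\cleq|\mu|\int_t^\infty\frac{\norm{v(s)}_{L^2}}{(1+s)^2}\,ds,
\]
which already explains the prefactor $|\mu|$ and isolates the single quantity to be estimated.

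The decisive ingredient is therefore a sharp $L^2$-bound for $v$ with $\Sigma^{r}$-data, which I would obtain from the explicit solution of \eqref{KG} in Fourier variables: after rescaling time by $(1+t)|\xi|$, the equation for $\hat v(t,\xi)$ is Bessel's equation of order $\nu$ from \eqref{eq1.1}, so $\hat v(t,\xi)$ is a combination of $\sqrt{1+t}\,J_{\pm\nu}((1+t)|\xi|)$ with $\xi$-dependent coefficients fixed by $(v_0,v_1)$. The growth in $t$ is driven by the low frequencies $|\xi|\lesssim(1+t)^{-1}$, on which the small-argument asymptotics give $|\hat v(t,\xi)|\cleq(1+t)^{1/2+\re\nu}(|\hat v_0(\xi)|+|\hat v_1(\xi)|)$ up to the resonant logarithm at $\mu=1/4$; estimating the $L^2$-norm over this ball by H\"older produces a volume factor $(1+t)^{-d(1/r-1/2)}=(1+t)^{-d(2-r)/(2r)}$, while the Hausdorff--Young inequality bounds the remaining $L^{r'}$-norm of the data by $\norm{(v_0,v_1)}_{\Sigma^{r}}$. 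Together these give the exponent $\alpha$, and the high (wave-like) frequencies contribute a bounded-in-$t$ term. This is exactly the estimate of \cite{NPR17} (improving \cite{BoRe12}), which I would record in the form
\[
	\norm{v(t)}_{L^2}\cleq\norm{(v_0,v_1)}_{\Sigma^{r}}\,(1+t)^{\max\{\alpha,0\}}\,L(t),
\]
where $L(t)$ is a logarithmic correction equal to $\{1+\log(1+t)\}^{\delta}$ in general, carrying an extra factor $\{1+\log(1+t)\}^{1/2}$ precisely at the endpoint $\alpha=0$, $r=1$ (from the $r=1$ endpoint of Hausdorff--Young), uniformly in $t$ and in the data.

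Inserting this bound into the tail integral and computing $\int_t^\infty(1+s)^{\max\{\alpha,0\}-2}L(s)\,ds$ reproduces the three regimes of the statement: for $\alpha<0$, and for $\alpha=0$ with $r>1$, the integrand is $\sim(1+s)^{-2}\{1+\log(1+s)\}^{\delta}$ and yields $(1+t)^{-1}\{1+\log(1+t)\}^{\delta}$; at the endpoint $\alpha=0$, $r=1$ the additional half-power of the logarithm survives; and for $\alpha\in(0,1)$ the factor $(1+s)^{\alpha-2}$ integrates to $(1+t)^{\alpha-1}\{1+\log(1+t)\}^{\delta}$. The hypothesis $\alpha<1$ is exactly the condition $\max\{\alpha,0\}-2<-1$, so both the defining integral for $\overrightarrow{v_{+}}$ and every tail integral converge, and the scheme breaks down only at $\alpha=1$ (logarithmic divergence), making the assumption sharp for this method. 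I expect the main obstacle to be the second paragraph: establishing the $L^2$-estimate with the \emph{correct} constant and logarithmic corrections uniformly in $t$ and the data, in particular the careful treatment of the two frequency regimes, the resonant case $\mu=1/4$ where the Bessel orders collide, and the Hausdorff--Young endpoint $r=1$. The Duhamel reduction of the first paragraph and the elementary integration of the third are then routine.
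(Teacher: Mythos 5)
Your proposal is correct and follows essentially the same route as the paper: the Duhamel reduction using that $\mathcal{W}(t)$ is an isometry on $\dot H^1\times L^2$, the definition of $\overrightarrow{v_+}$ via the convergent Duhamel integral, and the tail-integral bound are exactly Lemma \ref{lem3.1} and the proof of Theorem \ref{thm1.0}, while the $L^2$-estimate you sketch (Bessel representation, low/high frequency split, H\"older plus Hausdorff--Young giving the exponent $\alpha$, the Sobolev-embedding refinement at $\alpha=0$, $r>1$, and the extra half-logarithm only at $\alpha=0$, $r=1$) is precisely the paper's Lemma \ref{lem2.1}. The only cosmetic difference is that the paper splits frequencies into three zones $Z_1,Z_2,Z_3$ and proves this $L^2$-bound in detail (slightly improving \cite{NPR17}) rather than quoting it, but the substance of the argument is identical.
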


\begin{remark}
We can treat the case $\mu<0$ as long as $\alpha<1$. One can find  $\overrightarrow{v}_{+} \neq 0$ in general when $\mu<0$ (see Proposition \ref{propA.1} below). 
\end{remark}

By retransforming $u=(1+t)^{-\mu_1/2}v$, we have the following result for \eqref{DW}. 
%This is an improvement of \eqref{eq1.1} above obtained by \cite{Wir06,Wir07_2} from the viewpoint of asymptotic order. 
\begin{corollary}
\label{cor1.0}
%For the linear equation \eqref{DW}, we get
Let $r \in [1,2)$ and $u$ be the solution of \eqref{DW}. Assume $(u_0,u_1)\in \Sigma^{r}$ and that $\mu$ and $r$ satisfy $\alpha<1$. 
 Then, there exists $\overrightarrow{v_{+}} \in \dot{H}^1(\mathbb{R}^d) \times L^2(\mathbb{R}^d)$ such that
\begin{align*}
	&\norm{\overrightarrow{u}(t) -(1+t)^{-\frac{\mu_1}{2}} (\mathcal{W}(t)\overrightarrow{v_{+}})_1 }_{\dot{H}^1}
	\\
	&\cleq |\mu| \norm{ (v_0,v_1) }_{\Sigma^{r}}
%	\\
%	&\quad \times 
	\begin{cases}
	(1+t)^{-1 - \frac{\mu_1}{2}}\{1+\log(1+t)\}^{\delta} &\text{ if }   \alpha<0, \text{ or } \alpha=0 \text{ and } r>1,
	\\
	(1+t)^{ -1- \frac{\mu_1}{2}}\{1+\log(1+t)\}^{\frac{1}{2}+\delta}%\int_{t}^{\infty} (1+s)^{-2} \{1+\log(1+s)\}^{\frac{2-r}{2r}+\delta} ds  
	& \text{ if }   \alpha=0  \text{ and } r=1,
	\\
	(1+t)^{\alpha-1- \frac{\mu_1}{2}} \{1+\log(1+t)\}^{\delta}&\text{ if }    \alpha \in (0,1)
	\end{cases}
\end{align*}
and
\begin{align*}
	&\norm{\overrightarrow{u}(t) -(1+t)^{-\frac{\mu_1}{2}} (\mathcal{W}(t)\overrightarrow{v_{+}})_2 }_{L^2}
	\\
	&\cleq (|\mu| +|\mu_1|) \norm{ (v_0,v_1) }_{\Sigma^{r}}
	\\
	&\quad \times 
%	\begin{cases}
%	(1+t)^{-1 - \frac{\mu_1}{2}}\{1+\log(1+t)\}^{\delta} &\text{ if }   \alpha<0,
%	\\
%	%(1+t)^{ - \frac{\mu_1}{2}}\int_{t}^{\infty} (1+s)^{-2} \{1+\log(1+s)\}^{\frac{2-r}{2r}+\delta} ds  
%	(1+t)^{ -1- \frac{\mu_1}{2}}\{1+\log(1+t)\}^{\frac{2-r}{2r}+\delta}
%	& \text{ if }   \alpha=0,
%	\\
%	(1+t)^{\alpha-1- \frac{\mu_1}{2}} \{1+\log(1+t)\}^{\delta}&\text{ if }    0<\alpha <1.
%	\end{cases}
	\begin{cases}
	(1+t)^{-1 - \frac{\mu_1}{2}}\{1+\log(1+t)\}^{\delta} &\text{ if }   \alpha<0, \text{ or } \alpha=0 \text{ and } r>1,
	\\
	(1+t)^{ -1- \frac{\mu_1}{2}}\{1+\log(1+t)\}^{\frac{1}{2}+\delta}%\int_{t}^{\infty} (1+s)^{-2} \{1+\log(1+s)\}^{\frac{2-r}{2r}+\delta} ds  
	& \text{ if }   \alpha=0  \text{ and } r=1,
	\\
	(1+t)^{\alpha-1- \frac{\mu_1}{2}} \{1+\log(1+t)\}^{\delta}&\text{ if }    \alpha \in (0,1).
	\end{cases}
\end{align*}
\end{corollary}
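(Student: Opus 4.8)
The plan is to deduce Corollary~\ref{cor1.0} from Theorem~\ref{thm1.0} by inverting the Liouville transform $v=(1+t)^{\mu_1/2}u$. Writing $u=(1+t)^{-\mu_1/2}v$ and differentiating in $t$ gives the pointwise-in-time identities
\begin{align*}
	u(t)&=(1+t)^{-\frac{\mu_1}{2}}v(t),
	\\
	\partial_t u(t)&=(1+t)^{-\frac{\mu_1}{2}}\partial_t v(t)-\frac{\mu_1}{2}(1+t)^{-\frac{\mu_1}{2}-1}v(t),
\end{align*}
which are the only structural inputs. Throughout, let $R(t)$ denote the time factor on the right-hand side of Theorem~\ref{thm1.0} (so that $R(t)=(1+t)^{-1}\{1+\log(1+t)\}^{\delta}$, etc., according to the case).

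For the $\dot H^1$-estimate I would subtract $(1+t)^{-\mu_1/2}(\mathcal{W}(t)\overrightarrow{v_{+}})_1$ and factor out the scalar weight,
\begin{align*}
	u(t)-(1+t)^{-\frac{\mu_1}{2}}(\mathcal{W}(t)\overrightarrow{v_{+}})_1
	=(1+t)^{-\frac{\mu_1}{2}}\bigl(v(t)-(\mathcal{W}(t)\overrightarrow{v_{+}})_1\bigr).
\end{align*}
Taking $\norm{\cdot}_{\dot H^1}$ pulls out $(1+t)^{-\mu_1/2}$, and the $\dot H^1$-component of the norm bounded in Theorem~\ref{thm1.0} applies verbatim. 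This yields exactly the first asserted bound, with coefficient $|\mu|$ and the extra weight $(1+t)^{-\mu_1/2}$.

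The $L^2$-estimate is the only place where a new ingredient appears. Using the velocity identity,
\begin{align*}
	\partial_t u(t)-(1+t)^{-\frac{\mu_1}{2}}(\mathcal{W}(t)\overrightarrow{v_{+}})_2
	=(1+t)^{-\frac{\mu_1}{2}}\bigl(\partial_t v(t)-(\mathcal{W}(t)\overrightarrow{v_{+}})_2\bigr)
	-\frac{\mu_1}{2}(1+t)^{-\frac{\mu_1}{2}-1}v(t).
\end{align*}
The first term is controlled in $L^2$ by the second-component part of Theorem~\ref{thm1.0}, and supplies the $|\mu|$-contribution with decay $(1+t)^{-\mu_1/2}R(t)$. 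The second term is genuinely new: its $L^2$-norm equals $\frac{|\mu_1|}{2}(1+t)^{-\mu_1/2-1}\norm{v(t)}_{L^2}$, so I need an a priori bound $\norm{v(t)}_{L^2}\cleq (1+t)R(t)\norm{(v_0,v_1)}_{\Sigma^{r}}$.

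The main obstacle is precisely this $L^2$-bound on $v(t)$ itself. It cannot be recovered from the difference estimate of Theorem~\ref{thm1.0}, since the latter only controls $v(t)-(\mathcal{W}(t)\overrightarrow{v_{+}})_1$ in $\dot H^1$ while $\norm{(\mathcal{W}(t)\overrightarrow{v_{+}})_1}_{L^2}$ grows in $t$; it is instead furnished by the do Nascimento--Palmieri--Reissig type $L^2$-$L^r$ decay estimate \cite{NPR17} that already underlies the proof of Theorem~\ref{thm1.0}. A direct case check shows $(1+t)R(t)=(1+t)^{\max\{\alpha,0\}}$ up to the stated logarithmic factors, so after the $(1+t)^{-1}$ loss coming from the velocity identity the second term carries the same decay $R(t)$ and provides the $|\mu_1|$-contribution. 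Summing the two contributions produces the coefficient $(|\mu|+|\mu_1|)$ together with the common time factor $(1+t)^{-\mu_1/2}R(t)$, which is exactly the claimed right-hand side.
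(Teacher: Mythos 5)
Your proposal is correct and is exactly the argument the paper intends: the paper proves Corollary \ref{cor1.0} simply by the remark ``by retransforming $u=(1+t)^{-\mu_1/2}v$,'' and your write-up fleshes out precisely that retransformation, including the one nontrivial point --- that the extra term $\frac{\mu_1}{2}(1+t)^{-\mu_1/2-1}v(t)$ in the velocity identity must be controlled by the $\Sigma^r$--$L^2$ decay estimate (Lemma \ref{lem2.1}, the paper's refinement of \cite{NPR17}) rather than by Theorem \ref{thm1.0} itself, which is what produces the coefficient $(|\mu|+|\mu_1|)$. Your case check that $(1+t)^{-1}\norm{v(t)}_{L^2}$ carries the same time factor $R(t)$ in all three cases matches Lemma \ref{lem2.1} exactly, so no gap remains.
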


\begin{remark}
It is worth emphasizing that the assumption in Corollary \ref{cor1.0} is satisfied when $0<\mu_1<d+2$, $\mu_2=0$, and $r=1$.\footnote{It is not needed to assume $\mu_1>0$. Theorem \ref{thm1.0} and Corollary \ref{cor1.0} hold even when $\mu_1$ is negative. For example, those are valid when $-d<\mu_1<d+2$, $\mu_2=0$, and $r=1$.} D'Abbicco \cite[Theorem 1]{Dab15} showed the following estimate.
\begin{align*}
	\| u(t) \|_{\dot{H}^1} \cleq 
	\begin{cases}
	(1+t)^{-\frac{d}{2}} & \text{ if } \mu_1 > d+2
	\\
	(1+t)^{-\frac{\mu_1}{2}} \{1+\log(1+t)\} & \text{ if } \mu_1 = d+2
	\\
	(1+t)^{-\frac{\mu_1}{2}}& \text{ if } \mu_1 < d+2
	\end{cases}
\end{align*}
when $\mu_1>1$ and $\mu_2=0$, where the implicit constant depends on the $(H^1\cap L^1) \times ( L^2 \cap L^1)$-norm of the initial data. This estimate shows that  the solution satisfies the similar decay estimate to the corresponding heat equation if $\mu_1>d+2$ and to the modified wave equation if $\mu_1 <d+2$. 
Our result, Corollary \ref{cor1.0}, means that the solution of \eqref{DW} scatters to one of the modified wave equation when $\mu_1<d+2$, $\mu_2=0$, and the initial data belongs to $L^1$. 
\end{remark}

\begin{remark}
We also obtain similar results to Theorem \ref{thm1.0} and Corollary \ref{cor1.0} for the initial data in a negative homogeneous Sobolev space. In fact, the results partially cover Theorem \ref{thm1.0} and Corollary \ref{cor1.0}. See Appendix \ref{appB} for the details. 
\end{remark}

\subsubsection{Improvement of the asymptotic order for $H^1\times L^2$-data}

We have the following estimates for $\dot{H}^1\times L^2$-norm and $L^2$-norm.  
%By an energy method, Hirosawa and Nakazawa \cite[Theorem 1.2]{HiNa03} proved that $\lim_{t\to \infty} (1+t)\| \overrightarrow{u}(t) \|_{\dot{H}^1 \times L^2} =0$ and $\lim_{t\to \infty} \| u(t) \|_{L^2} =0$  when $\mu_1>2$ and $\mu_2=0$. In the following theorem, we give an extension of their result. Our proof is based on the density and the estimates of $\dot{H}^1\times L^2$-norm for the initial data in $\Sigma_{1}$. %, which is similar to the so-called invisible decay for the usual heat equation. 

\begin{theorem}
\label{thm1.5}
Let $(v_0,v_1) \in H^1(\mathbb{R}^d) \times L^2(\mathbb{R}^d)$ and $v$ be the solution of \eqref{KG}. 
If $\mu<0$, then we have
\begin{align}
\label{eq01}
	\lim_{t\to \infty} (1+t)^{\frac{1}{2} -\re \nu} \| \overrightarrow{v}(t) \|_{\dot{H}^1 \times L^2} =0.
\end{align}
Moreover, for any $\mu\in \mathbb{R}$, we have
\begin{align}
\label{eq02}
	\begin{cases}
	\lim_{t\to \infty} (1+t)^{-\frac{1}{2} -\re \nu} \| v(t) \|_{L^2} =0 & \text{ if } \mu\neq1/4,
	\\
	\lim_{t\to \infty} (1+t)^{-\frac{1}{2}}\{1+\log(1+t)\}^{-1} \| v(t) \|_{L^2} =0 & \text{ if } \mu=1/4.
	\end{cases}
\end{align}
Thus, letting $(u_0,u_1)\in H^1(\mathbb{R}^d) \times L^2(\mathbb{R}^d)$ and $u$ be the solution of \eqref{DW}, we also have
\begin{align}
\notag
	&\lim_{t\to \infty} (1+t)^{\frac{1}{2} -\re \nu +\frac{\mu_1}{2}}  \| \overrightarrow{u}(t) \|_{\dot{H}^1 \times L^2} =0 
	\quad \text{ if } \mu<0,
	\\
\label{eq03}
	&\begin{cases}
	\lim_{t\to \infty} (1+t)^{-\frac{1}{2} -\re \nu+\frac{\mu_1}{2}}  \| u(t) \|_{L^2}  =0 & \text{ if } \mu\neq1/4,
	\\
	\lim_{t\to \infty} (1+t)^{-\frac{1}{2}+\frac{\mu_1}{2}}\{1+\log(1+t)\}^{-1}   \| u(t) \|_{L^2} =0 & \text{ if } \mu=1/4.
	\end{cases}
%	\lim_{t\to \infty} (1+t)^{-\frac{1}{2} -\re \nu +\frac{\mu_1}{2}}  \| u(t) \|_{L^2} =0.
\end{align}
\end{theorem}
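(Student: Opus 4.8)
The plan is to deduce Theorem \ref{thm1.5} from the sharper bounds available for $\Sigma^r$-data together with a density argument, rather than from the scattering statement itself. The mechanism is that the solution operator for \eqref{KG} obeys an a priori bound on all of $H^1\times L^2$ whose rate is exactly the one appearing in \eqref{eq01}--\eqref{eq02}, while on the dense subspace $\Sigma^r$ (for any $r\in[1,2)$) the $L^2$-estimates of \cite{NPR17} gain the extra factor $(1+t)^{-\frac{d(2-r)}{2r}}=o(1)$. Transferring this gain to rough data by approximation yields the claimed $o$-decay.

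First I would record the a priori estimates on $H^1\times L^2$ coming from \cite{BoRe12,InMi20}, namely
\begin{align*}
	\norm{\overrightarrow{v}(t)}_{\dot{H}^1\times L^2}&\cleq (1+t)^{-\frac12+\re\nu}\norm{(v_0,v_1)}_{H^1\times L^2}\quad(\mu<0),\\
	\norm{v(t)}_{L^2}&\cleq (1+t)^{\frac12+\re\nu}\norm{(v_0,v_1)}_{H^1\times L^2}\quad(\mu\neq 1/4),
\end{align*}
with the factor $\{1+\log(1+t)\}$ replacing the second rate when $\mu=1/4$; these encode the ``visible'' growth. Next I would isolate the refined estimates for $\Sigma^r$-data already built into the proof of Theorem \ref{thm1.0}: for $\Sigma^r$-data the solution $\tilde v$ satisfies
\begin{align*}
	\norm{\overrightarrow{\tilde v}(t)}_{\dot{H}^1\times L^2}&\cleq (1+t)^{-\frac12+\re\nu-\frac{d(2-r)}{2r}}\norm{(\tilde v_0,\tilde v_1)}_{\Sigma^r},\\
	\norm{\tilde v(t)}_{L^2}&\cleq (1+t)^{\frac12+\re\nu-\frac{d(2-r)}{2r}}\norm{(\tilde v_0,\tilde v_1)}_{\Sigma^r}
\end{align*}
(up to the same logarithm at $\mu=1/4$). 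Crucially, since $\frac{d(2-r)}{2r}>0$ for every $r<2$, each refined rate is $o(\cdot)$ of the corresponding a priori rate, independently of the sign or size of $\mu$.

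With these in hand the core is the approximation step. Given $(v_0,v_1)\in H^1\times L^2$ and $\eps>0$, I would use density of $\Sigma^r$ (taking $r=1$) to pick $(\tilde v_0,\tilde v_1)\in\Sigma^1$ with $\norm{(v_0,v_1)-(\tilde v_0,\tilde v_1)}_{H^1\times L^2}<\eps$, and split $\overrightarrow{v}=\overrightarrow{w}+\overrightarrow{\tilde v}$, where $w$ solves \eqref{KG} with the small data $(v_0-\tilde v_0,v_1-\tilde v_1)$. Dividing \eqref{eq01} by $(1+t)^{-\frac12+\re\nu}$, the $w$-part is bounded via the a priori estimate by $\cleq\eps$, while the $\tilde v$-part is bounded via the refined estimate, which is $o(1)$ after dividing; thus
\begin{align*}
	(1+t)^{\frac12-\re\nu}\norm{\overrightarrow{v}(t)}_{\dot{H}^1\times L^2}\cleq \eps + (1+t)^{-\frac{d}{2}}\norm{(\tilde v_0,\tilde v_1)}_{\Sigma^1}.
\end{align*}
Taking $\limsup_{t\to\infty}$ for the fixed approximant kills the second term, leaving $\cleq\eps$, and letting $\eps\to0$ gives \eqref{eq01}. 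The identical scheme with the $L^2$-rate $(1+t)^{\frac12+\re\nu}$ (and its logarithmic variant at $\mu=1/4$) gives \eqref{eq02}, now with no sign restriction on $\mu$ since the gain $(1+t)^{-d/2}$ is always a genuine improvement. It is essential to send $t\to\infty$ \emph{before} $\eps\to0$, because $\norm{(\tilde v_0,\tilde v_1)}_{\Sigma^1}$ may blow up as $\eps\to0$.

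Finally, \eqref{eq03} follows by retransforming $u=(1+t)^{-\mu_1/2}v$: since $\nabla u=(1+t)^{-\mu_1/2}\nabla v$ and $\partial_t u=(1+t)^{-\mu_1/2}\partial_t v-\tfrac{\mu_1}{2}(1+t)^{-\mu_1/2-1}v$, the energy of $u$ is $(1+t)^{-\mu_1/2}$ times that of $v$ plus a term of size $(1+t)^{-\mu_1/2-1}\norm{v(t)}_{L^2}$; feeding \eqref{eq01} into the former and \eqref{eq02} into the latter shows both are $o\bigl((1+t)^{-\mu_1/2-\frac12+\re\nu}\bigr)$, and the $L^2$-part of \eqref{eq03} is immediate from \eqref{eq02}. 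I expect the main obstacle to be the second step: extracting from \cite{NPR17} (and from the proof of Theorem \ref{thm1.0}) the refined $L^2$-type bounds on $\Sigma^r$ in precisely the form $o$ of the a priori rates \emph{for all} $\mu$ — not merely when $\alpha<1$, a condition that governs scattering but not the decay estimate itself — and in verifying the interchange of limits in the approximation argument.
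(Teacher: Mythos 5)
Your proposal is correct and follows essentially the same route as the paper's own proof: approximate the $H^1\times L^2$ data by $\Sigma^1$ data, control the difference by the a priori estimate of \cite[Corollary A.9]{InMi20} applied via linearity, control the approximant by the refined $\Sigma^1$ estimates (Lemmas \ref{lem2.1} and \ref{lem2}), take $\limsup_{t\to\infty}$ \emph{before} letting $\eps\to 0$, and retransform $u=(1+t)^{-\mu_1/2}v$ for \eqref{DW}. The one inaccuracy is that the refined bounds you quote are overstated in the saturating regimes ($\alpha\le 0$ for the $L^2$ bound, $\alpha\le 1$ for the energy bound, where the correct bounds are constants or logarithms rather than decaying powers, since the $L^2$ norm and energy do not decay for this equation), but this is harmless: in those regimes the ratio to the a priori rate still tends to zero — for the energy because $\re\nu>1/2$ when $\mu<0$, and for the $L^2$ norm because $\re\nu\ge 0$ — which is exactly how the paper disposes of those cases.
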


\begin{remark}
\ 
\begin{enumerate}
\item If $\mu_1>2$ and $\mu_2=0$, then $\mu<0$. Moreover, in this case, we have $\frac{1}{2} -\re \nu + \frac{\mu_1}{2} =1$ and $-\frac{1}{2} -\re \nu +\frac{\mu_1}{2}=0$. Therefore, the results (1.7) and (1.8) in \cite[Theorem 1.2]{HiNa03} are included in Theorem \ref{thm1.5}. 
\item We note that we only assume $(v_0,v_1) \in H^1 \times L^2$ in Theorem \ref{thm1.5}. If we assume $(v_0,v_1)  \in \Sigma^{r}$ for some $r \in [1,2)$, then we can obtain the explicit estimate as in Lemma \ref{lem2}. %decay order. {\bf [ここも書き直すこと！]}
\item In the previous paper \cite[Corollary A.9]{InMi20}, we only showed that $\| \overrightarrow{v}(t) \|_{\dot{H}^1 \times L^2} \cleq (1+t)^{-\frac{1}{2} +\re \nu}$ and $\| v(t) \|_{L^2} \cleq (1+t)^{\frac{1}{2} +\re \nu}$ when $\mu<0$. Theorem \ref{thm1.5} gives better estimates. 

\item do Nascimento and Wirth showed that $\lim_{t\to \infty} (1+t)^{-1+\frac{\mu_1}{2}} \| u(t) \|_{L^2}  =0$ for $L^2 \times H^{-1}$-data in \cite[Remark 3.4]{NaWi15}. Theorem \ref{thm1.5} means that %the regularity $H^1\times L^2$ gives us 
the better estimate \eqref{eq03} than this estimate %\cite[Remark 3.4]{NaWi15} 
holds for $H^1\times L^2$-data. % \cite[Remark 3.4]{NaWi15}The order is better than that in \cite[Remark 3.4]{NaWi15} when $\mu>0$. } 
\end{enumerate}
\end{remark}

Theorem \ref{thm1.5} gives the following improvement of our previous asymptotic order. 

\begin{corollary}
\label{cor1.6}
Let $\mu >0$. Then, there exists $\overrightarrow{v_{+}} \in \dot{H}^1(\mathbb{R}^d) \times L^2(\mathbb{R}^d)$ such that
\begin{align*}
	&\lim_{t \to \infty} (1+t)^{\frac{1}{2}-\re\nu}\| \overrightarrow{v}(t) - \mathcal{W}(t)\overrightarrow{v_{+}}\|_{\dot{H}^1 \times L^2}=0 
	& \text{ if } \mu \neq1/4,
	\\
	&\lim_{t \to \infty} (1+t)^{\frac{1}{2}} \{1+\log(1+t)\}^{-1}\| \overrightarrow{v}(t) - \mathcal{W}(t)\overrightarrow{v_{+}}\|_{\dot{H}^1 \times L^2}=0 
	& \text{ if } \mu = 1/4.
\end{align*}
\end{corollary}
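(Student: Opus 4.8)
The plan is to deduce Corollary \ref{cor1.6} directly from the $L^2$ invisible estimate \eqref{eq02} of Theorem \ref{thm1.5}, via the Duhamel representation of the scattering remainder. Since $\mu>0$, the scattering result for $H^1\times L^2$-data from the previous paper \cite{InMi20} already furnishes the asymptotic profile $\overrightarrow{v_{+}}\in\dot{H}^1\times L^2$, characterized by $\overrightarrow{v_{+}}=\lim_{t\to\infty}\mathcal{W}(-t)\overrightarrow{v}(t)$ in $\dot{H}^1\times L^2$; the content of the corollary is to upgrade the known decay rate $(1+t)^{-\frac12+\re\nu}$ of the remainder to a little-$o$ statement. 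Hence the only substantive input is Theorem \ref{thm1.5}, and the rest is a short computation.

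First I would record the Duhamel formula. Writing \eqref{KG} in first-order form with the free wave generator, so that $\mathcal{W}(t)=e^{tA}$ and the mass term $-\mu(1+s)^{-2}v$ is treated as an inhomogeneity, one differentiates $s\mapsto \mathcal{W}(-s)\overrightarrow{v}(s)$ and integrates from $t$ to $\infty$ to obtain
\begin{align*}
	\overrightarrow{v}(t)-\mathcal{W}(t)\overrightarrow{v_{+}}
	=\int_t^\infty \mathcal{W}(t-s)\left(0,\frac{\mu}{(1+s)^2}v(s)\right)ds.
\end{align*}
Since the free wave propagator is an isometry on $\dot{H}^1\times L^2$ (energy conservation) and $\|(0,f)\|_{\dot{H}^1\times L^2}=\|f\|_{L^2}$, this yields the pointwise-in-time bound
\begin{align*}
	\| \overrightarrow{v}(t)-\mathcal{W}(t)\overrightarrow{v_{+}}\|_{\dot{H}^1\times L^2}
	\le |\mu|\int_t^\infty \frac{\|v(s)\|_{L^2}}{(1+s)^2}\,ds.
\end{align*}

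Next I would insert the $L^2$ invisible estimate. For $\mu\neq 1/4$, \eqref{eq02} gives $\|v(s)\|_{L^2}=o\big((1+s)^{\frac12+\re\nu}\big)$, so the integrand is $o\big((1+s)^{-\frac32+\re\nu}\big)$; because $\mu>0$ forces $\re\nu<\tfrac12$, the exponent is strictly below $-1$, the tail integral converges, and it equals $o\big((1+t)^{-\frac12+\re\nu}\big)$. Multiplying by the weight $(1+t)^{\frac12-\re\nu}$ then gives $o(1)$, which is precisely the claim. The threshold $\mu>0$ (equivalently $\re\nu<1/2$) is exactly what makes the tail integrable, and this is where the hypothesis enters. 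For the critical value $\mu=1/4$ one has $\re\nu=0$ and instead uses $\|v(s)\|_{L^2}=o\big((1+s)^{\frac12}\{1+\log(1+s)\}\big)$; the same computation together with $\int_t^\infty (1+s)^{-3/2}\{1+\log(1+s)\}\,ds\approx (1+t)^{-1/2}\{1+\log(1+t)\}$ yields a remainder that is $o\big((1+t)^{-\frac12}\{1+\log(1+t)\}\big)$, matching the weight $(1+t)^{\frac12}\{1+\log(1+t)\}^{-1}$.

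Given Theorem \ref{thm1.5}, the corollary is thus routine, and the only point requiring care is the propagation of the little-$o$ through the tail integral (in particular the handling of the logarithmic factor in the case $\mu=1/4$). This step is standard, since for $a<1$ one has that $o((1+s)^{a})$ integrated against $(1+s)^{-2}$ produces $o((1+t)^{a-1})$, and here $a=\tfrac12+\re\nu<1$ exactly when $\mu>0$. The genuine difficulty of the argument therefore lies entirely in establishing the invisible estimate \eqref{eq02}, which is carried out independently in Theorem \ref{thm1.5}.
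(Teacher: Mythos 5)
Your proposal is correct and follows essentially the same route as the paper: both deduce the corollary by feeding the invisible estimate \eqref{eq02} of Theorem \ref{thm1.5} (in quantified $\varepsilon$--$T_\varepsilon$ form) into the Duhamel bound $\norm{\overrightarrow{v}(t)-\mathcal{W}(t)\overrightarrow{v_+}}_{\dot{H}^1\times L^2}\cleq |\mu|\int_t^\infty (1+s)^{-2}\norm{v(s)}_{L^2}\,ds$ and evaluating the tail integral, using that $\mu>0$ forces $\re\nu<1/2$. The only difference is one of explicitness: the paper cites \cite[Proof of Theorem 1.1]{InMi20} for the Duhamel representation and dismisses $\mu=1/4$ as analogous, whereas you write out the isometry argument and the logarithmic tail integral in full, which is a faithful expansion of the same proof.
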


The result in Corollary \ref{cor1.6} is better than the asymptotic order in  \cite[Theorem 1.1]{InMi20}, that is, we get small order. 
%Corollary \ref{cor1.6} follows from the improvement of the estimate of the $L^2$-norm in Theorem \ref{thm1.5} and the proof of Theorem 1.1 in \cite{InMi20}. 
Corollary \ref{cor1.6} implies the estimate for \eqref{DW} by the retransformation. However, we omit the statement. 

\section{Preliminaries}
\label{sec2}

\subsection{Expression of the linear equation}

We consider the linear Klein-Gordon equation with the scale-invariant mass with the initial data given at $t=t_0\geq 0$:
\begin{align}
\label{eq2.1}
	\begin{cases}
	\displaystyle\partial_t^2 v - \Delta v +\frac{\mu}{(1+t)^2}v =0, & (t, x) \in (t_0,\infty) \times \mathbb{R}^d,
	\\
	(v(t_0), \partial_t v(t_0)) = (v_0, v_1), &  x \in \mathbb{R}^d.
	\end{cases}
\end{align}
We recall two expressions of the solution (see \cite{InMi20} for details). 

Let 
\begin{align*}
	e_{+}(t,\xi)&:=((1+t)|\xi|)^{\frac{1}{2}} J_{\nu}((1+t)|\xi|)
	\text{ and }
	e_{-}(t,\xi):=((1+t)|\xi|)^{\frac{1}{2}} Y_{\nu}((1+t)|\xi|) 
\end{align*}
where $J_{\nu}$ is the Bessel function (of the first kind)  and $Y_{\nu}$ is  the Neumann function (the Bessel function of the second type). 
%From the initial data $(\widehat{v}(t_0), \partial_t \widehat{v}(t_0)) = (\widehat{v_0}, \widehat{v_1})$, it holds that 
Then, the solution of \eqref{eq2.1} is given by 
\begin{align}
\label{eq2.3.0}
	v(t) = \mathcal{E}_{0}(t,t_0) v_0 + \mathcal{E}_{1}(t,t_0) v_1,
\end{align}
where $\mathcal{E}_{i}(t,t_0) = \mathcal{F}^{-1} E_{i}(t,t_0,\xi) \mathcal{F}$ for $i=0,1$,
% and $\mathcal{F}, \mathcal{F}^{-1}$ are the spatial Fourier transform and its inverse respectively, 
%\begin{align*}
%	\widehat{v}(t,\xi) =E_{0}(t,t_0,\xi) \widehat{v_0}(\xi) +E_{1}(t,t_0,\xi)\widehat{v_1}(\xi)
%\end{align*}
%where 
\begin{align*}
	E_{0}(t,t_0,\xi)
	&:=\frac{e_{+}(t,\xi) \dot{e_{-}}(t_0,\xi) - \dot{e_{+}}(t_0,\xi) e_{-}(t,\xi)}{e_{+}(t_0,\xi) \dot{e_{-}}(t_0,\xi) - \dot{e_{+}}(t_0,\xi) e_{-}(t_0,\xi) },
	\\
	E_{1}(t,t_0,\xi)
	&:=\frac{e_{+}(t_0,\xi) e_{-}(t,\xi) - e_{+}(t,\xi) e_{-}(t_0,\xi)}{e_{+}(t_0,\xi) \dot{e_{-}}(t_0,\xi) - \dot{e_{+}}(t_0,\xi) e_{-}(t_0,\xi) },
\end{align*}
$\mathcal{F}, \mathcal{F}^{-1}$ are the spatial Fourier transform and its inverse respectively, and $\dot{f}$ denotes the time derivative of the function $f$. 
%where $\dot{e_{+}}$ and $\dot{e_{-}}$ denote the time derivative of $e_{+}$ and $e_{-}$, respectively. 
%We also have 
%\begin{align*}
%	\frac{d\widehat{v}}{dt}(t,\xi) =\dot{E_{0}}(t,t_0,\xi) \widehat{v_0}(\xi) +\dot{E_{1}}(t,t_0,\xi)\widehat{v_1}(\xi)
%\end{align*}
%where $\dot{E_0}$ and $\dot{E_1}$ denote the time derivative of $E_0$ and $E_1$, respectively. 
%Namely, they are
%\begin{align*}
%	\dot{E_{0}}(t,t_0,\xi)
%	&:=\frac{\dot{e_0}(t,\xi) \dot{e_1}(t_0,\xi) - \dot{e_0}(t_0,\xi) \dot{e_1}(t,\xi)}{e_0(t_0,\xi) \dot{e_1}(t_0,\xi) - \dot{e_0}(t_0,\xi) e_1(t_0,\xi) }
%	\\
%	\dot{E_{1}}(t,t_0,\xi)
%	&:=\frac{e_0(t_0,\xi) \dot{e_1}(t,\xi) - \dot{e_0}(t,\xi) e_1(t_0,\xi)}{e_0(t_0,\xi) \dot{e_1}(t_0,\xi) - \dot{e_0}(t_0,\xi) e_1(t_0,\xi) }
%\end{align*}
%Therefore, the solution $v$ of \eqref{eq2.1} is given by 
%\begin{align}
%\label{eq2.3.0}
%	v(t) = \mathcal{E}_{0}(t,t_0) v_0 + \mathcal{E}_{1}(t,t_0) v_1.
%\end{align}
%where $\mathcal{E}_{i}(t,t_0) = \mathcal{F}^{-1} E_{i}(t,t_0,\xi) \mathcal{F}$ for $i=0,1$ and $\mathcal{F}, \mathcal{F}^{-1}$ are the spatial Fourier transform and its inverse, respectively. 
By the vector expression, we have
\begin{align*}
	\begin{pmatrix}
	v \\ \dot{v}
	\end{pmatrix}
	= \mathcal{E}(t,t_0)
	\begin{pmatrix}
	v_0 \\ v_1
	\end{pmatrix}
\text{ where }
	\mathcal{E}(t,t_0)=
	\begin{pmatrix}
	 \mathcal{E}_{0}(t,t_0) &  \mathcal{E}_{1}(t,t_0)
	 \\
	 \dot{\mathcal{E}_{0}}(t,t_0) &  \dot{\mathcal{E}_{1}}(t,t_0)
	\end{pmatrix}.
\end{align*}

We have another expression of the solutions regarding the time dependent mass as an inhomogeneous term, namely, 
%\begin{align*}
%	\partial_t^2 v - \Delta v =-\frac{\mu}{(1+t)^2} v.
%\end{align*}
%Then, we have the following expression.
\begin{align}
\label{eq2.3}
	v(t) = \mathcal{W}_0(t-t_0) v_0 + \mathcal{W}_1(t-t_0)  v_1 + \int_{t_0}^{t} \mathcal{W}_1(t-s) \frac{-\mu}{(1+s)^2} v(s)ds,
\end{align}
where $\mathcal{W}_0:= \cos (t|\nabla|)$ and $\mathcal{W}_1:=|\nabla|^{-1}\sin (t|\nabla|)$ are the propagators of the free wave equation. 
By the vector expression, we have
\begin{align*}
	\begin{pmatrix}
	v \\ \dot{v}
	\end{pmatrix}
	= \mathcal{W}(t-t_0)
	\begin{pmatrix}
	v_0 \\ v_1
	\end{pmatrix}
	+ \int_{t_0}^{t}  \mathcal{W}(t-s) F(s,u(s)) ds
\end{align*}
where 
\begin{align*}
	\mathcal{W}(t):=
	\begin{pmatrix}
	 \mathcal{W}_{0}(t) &  \mathcal{W}_{1}(t)
	 \\
	 \dot{\mathcal{W}_{0}}(t) &  \dot{\mathcal{W}_{1}}(t)
	\end{pmatrix},
	\quad
	F(t,u) :=
	\begin{pmatrix}
	0
	\\
	-\frac{\mu}{(1+t)^2} u
	\end{pmatrix}.
\end{align*}

%$v$ is also written by the solution map of the wave equation $\mathcal{W}_0$ and $\mathcal{W}_1$, that is, 
%by the Duhamel formula, we have
%\begin{align*}
%	v(t) = \mathcal{W}_0(t-\tau) v(\tau) + \mathcal{W}_1(t-\tau) \partial_t v (\tau) + \int_{\tau}^{t} \mathcal{W}_1(t-s) \frac{\mu^2}{(1+s)^2} v(s)ds 
%\end{align*}

%%%%%%%%%%%%%%%%%%%%%%%%%%%%%%%%%%%%%%%%%%%%%%%%%%%%%%%%
\subsection{Estimates of $L^2$-norm}

Recalling the estimate of the $L^2$-norm of the solution to \eqref{eq2.1} by do Nascimento, Palmieri, and Reissig \cite{NPR17}, we give a little better estimate than that in \cite{NPR17}. See also \cite{Dab15} for $L^1$-data.

\begin{lemma}[$\Sigma^{r}$-$L^2$ estimate]% (see {\cite[Theorem 4.3]{NPR17}})]
\label{lem2.1}
Let $1\leq r <2$. 
If $(v_0, v_1)$ belongs to $\Sigma^{r}$, then the solution $v$ of \eqref{eq2.1} satisfies the following estimates.
%\begin{align*}
%	&\| v(t) \|_{\dot{H}^1}  \cleq \| v_0 \|_{H^1} +\| v_1 \|_{L^2},
%	\\
%	&\| \partial_t v(t) \|_{L^2} \cleq \| v_0 \|_{H^1} +\| v_1 \|_{L^2},
%\end{align*}
%where the implicit constants do not depend on $t_0$ and $t$. 
%Moreover, we have the following estimate.
\begin{align*}
	&\| v(t) \|_{L^2} \leq C_{t_0} \| (v_0,v_1) \|_{\Sigma^{r}} 
	\begin{cases}
	\{1+\log(1+t)\}^{\delta} & \text{ if } \alpha < 0, \text{ or } \alpha=0 \text{ and } r>1,
	\\
	\{1+\log(1+t)\}^{\frac{1}{2}+\delta} & \text{ if } \alpha = 0 \text{ and } r=1,
	\\
	(1+t)^{\alpha}\{1+\log(1+t)\}^{\delta} & \text{ if } \alpha > 0,
	\end{cases} 
\end{align*}
where the  constant $C_{t_0}$ is independent of $t$. 
\end{lemma}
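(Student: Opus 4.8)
The plan is to reduce the statement to pointwise bounds on the Fourier multipliers $E_0(t,t_0,\xi)$ and $E_1(t,t_0,\xi)$, since by \eqref{eq2.3.0} and Plancherel $\norm{v(t)}_{L^2}=\norm{E_0(t,t_0,\cdot)\mathcal{F}v_0+E_1(t,t_0,\cdot)\mathcal{F}v_1}_{L^2_\xi}$, which I would split as $\norm{E_0\,\mathcal{F}v_0}_{L^2}+\norm{E_1\,\mathcal{F}v_1}_{L^2}$. The first simplification is the denominator: the profile equation $\partial_t^2\widehat v+|\xi|^2\widehat v+\mu(1+t)^{-2}\widehat v=0$ has no first-order term, so the time-Wronskian $e_+\dot e_- -\dot e_+ e_-$ is constant in $t$; evaluating it with the Bessel Wronskian $J_\nu Y_\nu'-J_\nu'Y_\nu=2/(\pi z)$ gives $e_+\dot e_- -\dot e_+ e_-\equiv \tfrac{2}{\pi}|\xi|$. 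Hence $E_0,E_1$ are, up to the factor $\tfrac{\pi}{2}|\xi|^{-1}$, explicit bilinear expressions in $e_\pm(t,\xi)$, $e_\pm(t_0,\xi)$, $\dot e_\pm(t_0,\xi)$; in the free case $\mu=0$ one checks they reduce to $\cos((t-t_0)|\xi|)$ and $|\xi|^{-1}\sin((t-t_0)|\xi|)$, which is a useful consistency check.

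Next I would record multiplier bounds in three frequency zones, using the large- and small-argument asymptotics of $J_\nu,Y_\nu$ (constants may depend on $t_0$ and $\mu$, which is permitted since the conclusion allows $C_{t_0}$). In the high zone $(1+t_0)|\xi|\ge 1$ both arguments are oscillatory, $e_\pm=O(z^{-1/2}\cdot z^{1/2})=O(1)$, and one gets $|E_0|\lesssim 1$, $|E_1|\lesssim|\xi|^{-1}$. In the intermediate zone $(1+t)^{-1}\le|\xi|\le(1+t_0)^{-1}$ (oscillatory at time $t$, but $(1+t_0)|\xi|\le 1$) the small-argument behavior $e_+(t_0)\sim z_0^{1/2+\nu}$, $e_-(t_0)\sim z_0^{1/2-\nu}$, $\dot e_\pm(t_0)\sim|\xi|z_0^{-1/2\pm\nu}$ together with $e_\pm(t)=O(1)$ yields $|E_0|,|E_1|\lesssim C_{t_0}|\xi|^{-(1/2+\re\nu)}$. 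In the very-low zone $(1+t)|\xi|<1$ all arguments are small and the $|\xi|$-factors cancel, leaving $|E_0|,|E_1|\lesssim C_{t_0}(1+t)^{1/2+\re\nu}$, uniformly in $\xi$. When $\mu=1/4$ (so $\nu=0$), the second solution acquires the logarithm $Y_0(z)\sim\tfrac2\pi\log z$, which upgrades these bounds by a factor $\{1+\log(1+t)\}$ (resp.\ $\log(1/|\xi|)$); this is exactly the source of the exponent $\delta$.

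The assembly then proceeds zone by zone. The high zone gives $\lesssim C_{t_0}\norm{(v_0,v_1)}_{L^2\times L^2}\lesssim C_{t_0}\norm{(v_0,v_1)}_{\Sigma^{r}}$, an $O(1)$ contribution. In the very-low zone, Hausdorff--Young ($\norm{\mathcal{F}v_j}_{L^{r'}}\lesssim\norm{v_j}_{L^r}$) followed by Hölder against $\1_{\{|\xi|<(1+t)^{-1}\}}$ produces the factor $|\{|\xi|<(1+t)^{-1}\}|^{1/2-1/r'}=(1+t)^{-d(2-r)/(2r)}$, which multiplied by the growth $(1+t)^{1/2+\re\nu}$ gives precisely $(1+t)^{\alpha}$. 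The intermediate zone is the crux: there the multiplier is morally the Riesz potential $|\nabla|^{-(1/2+\re\nu)}$ restricted to $\{(1+t)^{-1}\le|\xi|\le(1+t_0)^{-1}\}$. For $\alpha<0$ the exponent is subcritical and the low-frequency cutoff makes it bounded $L^r\to L^2$; for $\alpha=0$ with $r>1$ the exponent is the Sobolev-critical $d(1/r-1/2)$ and Hardy--Littlewood--Sobolev gives boundedness with no loss; for $\alpha=0$ with $r=1$ the endpoint fails, and estimating directly via $\mathcal{F}v_j\in L^\infty$ and $\int_{(1+t)^{-1}}^{c_0}\rho^{-1}\,d\rho\sim\log(1+t)$ yields exactly the $\{1+\log(1+t)\}^{1/2}$ loss; for $\alpha>0$ a Hölder/Hausdorff--Young estimate dominated by the lower endpoint $(1+t)^{-1}$ reproduces $(1+t)^{\alpha}$. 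Taking the largest contribution in each regime gives the stated bounds.

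I expect the \emph{main obstacle} to be the critical case $\alpha=0$: one must distinguish $r>1$ (where the Sobolev embedding $\dot W^{\,d(1/r-1/2),r}\hookrightarrow L^2$ saves the day with no logarithmic price) from the endpoint $r=1$ (where this embedding fails and a genuine $\{1+\log(1+t)\}^{1/2}$ appears), and to do so with constants independent of $t$. A secondary technical point is the careful treatment of the degenerate order $\nu=0$ ($\mu=1/4$), where the logarithmic Bessel asymptotics must be tracked through every zone to obtain the sharp exponents $\delta$ and $\tfrac12+\delta$; the remaining estimates are routine once the uniform multiplier bounds above are in place.
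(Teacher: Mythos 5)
Your proposal is correct and takes essentially the same route as the paper: the identical three-zone frequency decomposition (your high/intermediate/very-low zones are exactly the paper's $Z_1,Z_2,Z_3$), the same pointwise multiplier bounds on $E_0,E_1$ (which the paper imports from \cite{InMi20} rather than rederiving from the Wronskian and Bessel asymptotics as you do), and the same assembly via Plancherel, H\"older and Hausdorff--Young, with the Sobolev embedding $L^r\hookrightarrow \dot{H}^{-(1/2+\re\nu)}$ in the critical case $\alpha=0$, $r>1$, and the logarithmic integral producing the $\{1+\log(1+t)\}^{1/2}$ loss at the endpoint $r=1$. Your identification of the critical case and of the origin of the exponent $\delta$ for $\mu=1/4$ matches the paper's treatment (deferred there to \cite[Appendix A.4]{InMi20}).
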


%\begin{remark}
%In \cite{NPR17}, do Nascimento, Palmieri, and Reissig  only treated the case of $\mu\geq 1/4$. However, their $L^2$-estimate can be naturally extended to the case of $\mu <1/4$. 
%\end{remark}

\begin{proof}
%In the similar way to \cite{NPR17}, we get the result. Thus we only give a sketch of the proof.% for the readers' convenience. 
Recall that $\widehat{v}(t,\xi)=E_0(t,t_0,\xi)\widehat{v_0}(\xi)+E_1(t,t_0,\xi)\widehat{v_1}(\xi)$. We only consider the case of $\mu\neq1/4$. 
For $t>t_0\geq 0$ and for large $N\in \mathbb{N}$, we divide $\mathbb{R}^d$ into 
\begin{align*}
	Z_1&=Z_1(t,t_0,N):=\{\xi \in \mathbb{R}^d : N \leq (1+t_0) |\xi| \},
	\\
	Z_2&=Z_2(t,t_0,N):=\{\xi \in \mathbb{R}^d :  (1+t_0) |\xi|\leq N \leq (1+t) |\xi|\},
	\\
	Z_3&=Z_3(t,t_0,N):=\{\xi \in \mathbb{R}^d :  (1+t) |\xi| \leq N \}.
\end{align*}

\noindent{\bf Case 1. Estimate in $Z_1$.} By using the $L^\infty$-estimates of $E_{0}$ and $E_{1}$ in \cite[Appendix A.3, Case 1-1]{InMi20}, we get
\begin{align*}
	&\| E_{0} (t,t_0,\xi) \widehat{v_0} \|_{L^2(Z_1)} + \| E_{1} (t,t_0,\xi) \widehat{v_1} \|_{L^2(Z_1)}
	\\
	&\cleq \| \widehat{v_0} \|_{L^2(Z_1)} + \| |\xi|^{-1}\widehat{v_1}\|_{L^2(Z_1)}
	\\
	&\cleq \| \widehat{v_0} \|_{L^2(Z_1)} +  (1+t_0)\|\widehat{v_1}\|_{L^2(Z_1)}.
\end{align*}

\noindent{\bf Case 2. Estimate in $Z_2$.} By the $L^\infty$-estimates of $E_{0}$ and $E_{1}$ in \cite[Appendix A.3, Case 1-2]{InMi20}, we have
\begin{align*}
	&\| E_{0} (t,t_0,\xi) \widehat{v_0} \|_{L^2(Z_2)} + \| E_{1} (t,t_0,\xi) \widehat{v_1} \|_{L^2(Z_2)}
	\\
	&\cleq \| |\xi|^{-\frac{1}{2}-\re\nu }\widehat{v_0} \|_{L^2(Z_2)} +(1+t_0)^{\frac{1}{2}-\re\nu} \| |\xi|^{-\frac{1}{2}-\re\nu }\widehat{v_1}\|_{L^2(Z_2)}.
%	\\
%	&\cleq_{t_0} \| |\xi|^{-\frac{1}{2}-\re\nu } \|_{L^{\frac{2r}{2-r}}(Z_2)} ( \| \widehat{v_0} \|_{L^{r'}} + \|\widehat{v_1}\|_{L^{r'}})
%	\\
%	&\cleq_{t_0} \| |\xi|^{-\frac{1}{2}-\re\nu } \|_{L^{\frac{2r}{2-r}}(Z_2)} ( \| v_0 \|_{L^{r}} + \| v_1\|_{L^{r}}),
\end{align*}
When $\alpha=0$ and $r>1$, by the Sobolev embedding $L^r(\mathbb{R}^d) \hookrightarrow \dot{H}^{-(1/2+\re\nu)}$, we obtain
\begin{align*}
	\| |\xi|^{-\frac{1}{2}-\re\nu }\widehat{v_0} \|_{L^2(Z_2)} +(1+t_0)^{\frac{1}{2}-\re\nu} \| |\xi|^{-\frac{1}{2}-\re\nu }\widehat{v_1}\|_{L^2(Z_2)}
	\cleq_{t_0}  \| v_0 \|_{L^{r}} + \| v_1\|_{L^{r}}
\end{align*}
In other cases, by the H\"{o}lder inequality and the Hausdorff-Young inequality, we obtain
\begin{align*}
	&\| |\xi|^{-\frac{1}{2}-\re\nu }\widehat{v_0} \|_{L^2(Z_2)} +(1+t_0)^{\frac{1}{2}-\re\nu} \| |\xi|^{-\frac{1}{2}-\re\nu }\widehat{v_1}\|_{L^2(Z_2)}
	\\
	&\cleq_{t_0} \| |\xi|^{-\frac{1}{2}-\re\nu } \|_{L^{\frac{2r}{2-r}}(Z_2)} ( \| \widehat{v_0} \|_{L^{r'}} + \|\widehat{v_1}\|_{L^{r'}})
	\\
	&\cleq_{t_0} \| |\xi|^{-\frac{1}{2}-\re\nu } \|_{L^{\frac{2r}{2-r}}(Z_2)} ( \| v_0 \|_{L^{r}} + \| v_1\|_{L^{r}}),
\end{align*}
where $r'$ denotes the H\"{o}lder conjugate exponent of $r$. % and we note that $1\leq r <2$. 
We have
\begin{align*}
	\| |\xi|^{-\frac{1}{2}-\re\nu } \|_{L^{\frac{2r}{2-r}}(Z_2)}
	 \cleq 
	\begin{cases}
	(1+t_0)^{\alpha} & \text{ if } \alpha<0,
	\\
	\{\log(1+t)\}^{\frac{1}{2}} & \text{ if } \alpha=0 \text{ and } r=1,
	\\
	(1+t)^{\alpha} & \text{ if } \alpha>0.
	\end{cases}
\end{align*}
Therefore, we obtain
\begin{align*}
	&\| E_{0} (t,t_0,\xi) \widehat{v_0} \|_{L^2(Z_2)} + \| E_{1} (t,t_0,\xi) \widehat{v_1} \|_{L^2(Z_2)}
	\\
	&\cleq_{t_0} ( \|  v_0 \|_{L^r} + \| v_1\|_{L^r})
	\begin{cases}
	1 & \text{ if } \alpha<0, \text{ or } \alpha=0 \text{ and } r>1,
	\\
	\{\log(1+t)\}^{\frac{1}{2}} & \text{ if } \alpha=0 \text{ and } r=1,
	\\
	(1+t)^{\alpha} & \text{ if } \alpha>0.
	\end{cases}
\end{align*}

\noindent{\bf Case 3. Estimate in $Z_3$.} By the $L^\infty$-estimates of $E_{0}$ and $E_{1}$ in \cite[Appendix A.3, Case 1-3]{InMi20}, the H\"{o}lder inequality, and the Hausdorff-Young inequality, we have
\begin{align*}
	&\| E_{0} (t,t_0,\xi) \widehat{v_0} \|_{L^2(Z_3)} + \| E_{1} (t,t_0,\xi) \widehat{v_1} \|_{L^2(Z_3)}
	\\
	&\cleq (1+t)^{\frac{1}{2}+\re\nu} (\| \widehat{v_0} \|_{L^2(Z_3)} + (1+t_0)^{\frac{1}{2}-\re\nu}\| |\xi|^{-1}\widehat{v_1}\|_{L^2(Z_3)})
	\\
	&\cleq_{t_0}(1+t)^{\frac{1}{2}+\re\nu} \|1\|_{L^{\frac{2r}{2-r}}(Z_3)} (\| \widehat{v_0} \|_{L^{r'}} +\|\widehat{v_1}\|_{L^{r'}})
	\\
	&\cleq_{t_0}(1+t)^{\frac{1}{2}+\re\nu-\frac{d(2-r)}{2r}} (\|  v_0 \|_{L^r} +\| v_1\|_{L^r})
\end{align*}

Combining these estimates, we get the statement. When $\mu =1/4$, we get the logarithmic term by a modification (see \cite[Appendix A.4]{InMi20}).
\end{proof}

By the similar argument, we have the boundedness of the $\dot{H}^1 \times L^2$-norm when $(v_0,v_1) \in \Sigma^{r}$. 

\begin{lemma}
\label{lem2}
Let $1\leq r < 2$. 
If $(v_0, v_1)$ belongs to $\Sigma^{r}$, then the solution $v$ of \eqref{eq2.1} satisfies the following estimates.
\begin{align*}
	\| \overrightarrow{v}(t) \|_{\dot{H}^1 \times L^2} 
	&\leq C_{t_0} \| (v_0,v_1) \|_{\Sigma^{r}}
%	\\
%	&\quad \times 
	\begin{cases}
	1 & \text{ if } \alpha<1,  \text{ or } \alpha=1 \text{ and } r>1,
	\\
	\{\log(1+t)\}^{\frac{1}{2}} & \text{ if } \alpha=1 \text{ and } r=1,
	\\
	(1+t)^{\alpha-1} & \text{ if } \alpha>1.
	\end{cases}
\end{align*}
where the  constant $C_{t_0}$ is independent of $t$. 
\end{lemma}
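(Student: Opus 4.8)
The plan is to run the three-zone analysis of the proof of Lemma \ref{lem2.1} almost verbatim, the only change being that the Fourier multiplier $E_i$ is replaced by the two multipliers $|\xi|E_i$ and $\dot{E_i}$ that arise when one measures $\overrightarrow{v}$ in $\dot{H}^1\times L^2$ rather than $v$ in $L^2$. From the representation \eqref{eq2.3.0} we have $\widehat{\partial_t v}=\dot{E_0}\widehat{v_0}+\dot{E_1}\widehat{v_1}$, so that by Plancherel's theorem
\[
\norm{\overrightarrow{v}(t)}_{\dot{H}^1\times L^2}\cleq \sum_{i=0,1}\l(\norm{|\xi|E_i\widehat{v_i}}_{L^2}+\norm{\dot{E_i}\widehat{v_i}}_{L^2}\r).
\]
First I would again split $\mathbb{R}^d=Z_1\cup Z_2\cup Z_3$ exactly as in Lemma \ref{lem2.1} and estimate each of these four pieces on the three zones using $L^\infty$-bounds of the symbols.

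The heart of the matter is that, relative to $E_i$, both multiplying by $|\xi|$ and differentiating in $t$ cost exactly one additional power of $|\xi|$ on each zone. For $|\xi|E_i$ this is obvious; for $\dot{E_i}$ it follows by writing $\partial_t=|\xi|\,\partial_z$ with $z=(1+t)|\xi|$ and using that the common denominator of $E_0,E_1$ is the Wronskian $e_+(t_0)\dot{e_-}(t_0)-\dot{e_+}(t_0)e_-(t_0)=\tfrac{2}{\pi}|\xi|$. Feeding the small- and large-argument asymptotics of $J_\nu$ and $Y_\nu$ from \cite[Appendix A.3]{InMi20} into this observation changes the bounds of Lemma \ref{lem2.1} as follows. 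On $Z_1$ one controls everything by $\norm{v_0}_{\dot{H}^1}+\norm{v_1}_{L^2}\cleq\norm{(v_0,v_1)}_{\Sigma^{r}}$. On $Z_2$ the weight $|\xi|^{-1/2-\re\nu}$ appearing in Lemma \ref{lem2.1} is replaced by $|\xi|^{1/2-\re\nu}$. On $Z_3$ the prefactor $(1+t)^{1/2+\re\nu}$ is improved to $(1+t)^{-1/2+\re\nu}$, equivalently one gains the extra factor $|\xi|\cleq(1+t)^{-1}$.

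With these inputs the remaining steps are identical to Lemma \ref{lem2.1}: Hölder's inequality, the Hausdorff--Young inequality, and, at the borderline when $r>1$, the Sobolev embedding $L^r\hookrightarrow\dot{H}^{1/2-\re\nu}$. The net effect of the extra power of $|\xi|$ is simply to shift the exponent governing the low-frequency integrals from $\alpha$ to $\alpha-1$. Indeed the $Z_2$-integral $\norm{|\xi|^{1/2-\re\nu}}_{L^{2r/(2-r)}(Z_2)}$ is bounded when $\alpha<1$, is logarithmic of order $\{\log(1+t)\}^{(2-r)/(2r)}$ (hence $\{\log(1+t)\}^{1/2}$ for $r=1$) when $\alpha=1$, and grows like $(1+t)^{\alpha-1}$ when $\alpha>1$, while the $Z_3$-contribution is $\cleq_{t_0}(1+t)^{\alpha-1}$; at $\alpha=1$ with $r>1$ the Sobolev embedding removes the logarithm and yields the bound $1$. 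This reproduces the three cases in the statement. I expect the only genuinely delicate point to be the symbol bounds for the differentiated multiplier $\dot{E_i}$ in the large-argument (wave) regime, where one must verify that $\partial_z$ applied to the oscillatory factor $z^{1/2}\mathcal{C}_\nu(z)$ does not destroy the gain of one power of $|\xi|$; everything else is a transcription of Lemma \ref{lem2.1}. Finally, the exceptional case $\mu=1/4$ is treated by the same modification as in \cite[Appendix A.4]{InMi20}; note that $\mu=1/4$ forces $\alpha<1$, and the logarithmic correction present in the $L^2$-estimate does not survive here because the additional factor of $|\xi|$ makes the relevant low-frequency integral convergent, so no $\delta$-term is needed.
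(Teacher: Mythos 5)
Your proposal is correct and takes essentially the same approach as the paper: the paper's own (very terse) proof likewise reduces the $\dot{H}^1\times L^2$ bound to $L^\infty$ estimates of the multipliers $|\xi|E_j(t,t_0,\xi)$ and $\dot{E}_j(t,t_0,\xi)$ (citing Lemma A.8 of \cite{InMi20} for exactly the symbol bounds you identify as the delicate point) and then reruns the three-zone argument of Lemma \ref{lem2.1}, with the extra power of $|\xi|$ shifting $\alpha$ to $\alpha-1$ and, as you note, eliminating the logarithmic correction when $\mu=1/4$. Your write-up simply supplies the details the paper omits.
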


\begin{proof}
It is sufficient to consider the estimates for $|\xi|E_j(t,t_0,\xi)$ and $\dot{E}_j(t,t_0,\xi)$ for $j =0,1$. These estimates can be obtained by Lemma A.8 in \cite{InMi20} and the same method as in the proof of Lemma \ref{lem2}. We omit the detail proof.  We only note that the logarithmic term does not appear in the estimate of the $\dot{H}^1\times L^2$-norm when $\mu=1/4$. % and thus $\delta$ does not appear in the case of $\alpha\geq 1$.%{\bf [もっと説明を書くこと]}
\end{proof}

%%%%%%%%%%%%%%%%%%%%%%%%%%%%%%%%%%%%%%%%%%%%%%%%%%%%%%%%%%%%%%%%%%%%%%%%%%%%%%%%%%%%%%%%%%%%%%%%%%%%%%%%%%%%%%%%%%%%%%%%%%%%%%%%%%%%%%%%%%
\section{Proof of the main results}

We show the existence of the limit of $\overrightarrow{v}(t)$ in $\dot{H}^1(\mathbb{R}^d) \times L^2(\mathbb{R}^d)$.

\begin{lemma}%[{See \cite[Lemma 3.1]{InMi20}}]
\label{lem3.1}
Under the assumption of Theorem \ref{thm1.0}, there exists $\overrightarrow{v_{+}} \in \dot{H}^1(\mathbb{R}^d) \times L^2(\mathbb{R}^d)$ such that
\begin{align*}
	\norm{\overrightarrow{v}(t) - \mathcal{W}(t)\overrightarrow{v_{+}}}_{\dot{H}^1 \times L^2}
	\to 0
\end{align*}
as $t \to \infty$.  
%for arbitrary small $\varepsilon>0$, 
%where $\overrightarrow{v}(t)=(v(t),\partial_t v(t))$ is a solution to \eqref{KG}. 
\end{lemma}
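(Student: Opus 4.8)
The plan is to construct $\overrightarrow{v_{+}}$ explicitly from the Duhamel representation \eqref{eq2.3} together with the group structure of the free wave propagator $\mathcal{W}$. Recall that $\mathcal{W}(t)$ is a one-parameter group on $\dot{H}^1 \times L^2$ which is isometric in the energy norm, so that $\mathcal{W}(t-s)=\mathcal{W}(t)\mathcal{W}(-s)$ and $\norm{\mathcal{W}(\tau)\overrightarrow{g}}_{\dot{H}^1\times L^2}=\norm{\overrightarrow{g}}_{\dot{H}^1\times L^2}$ for every $\tau$. Taking $t_0=0$ and applying $\mathcal{W}(-t)$ to the vector form of \eqref{eq2.3} gives
\begin{align*}
	\mathcal{W}(-t)\overrightarrow{v}(t)
	= \overrightarrow{v}(0) + \int_{0}^{t} \mathcal{W}(-s) F(s,v(s))\, ds,
\end{align*}
where $F(s,v)=(0,-\mu(1+s)^{-2}v)$. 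This reduces the statement to showing that the right-hand side converges in $\dot{H}^1\times L^2$ as $t\to\infty$.

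Accordingly, I would define the candidate limit
\begin{align*}
	\overrightarrow{v_{+}} := \overrightarrow{v}(0) + \int_{0}^{\infty} \mathcal{W}(-s) F(s,v(s))\, ds
\end{align*}
and verify that this improper integral converges absolutely in the energy norm. Since $\mathcal{W}(-s)$ is an isometry and the first component of $F$ vanishes,
\begin{align*}
	\norm{\mathcal{W}(-s) F(s,v(s))}_{\dot{H}^1\times L^2}
	= \frac{|\mu|}{(1+s)^2}\,\norm{v(s)}_{L^2}.
\end{align*}
The key input is then the $\Sigma^{r}$-$L^2$ estimate of Lemma \ref{lem2.1}, which bounds $\norm{v(s)}_{L^2}$ by $(1+s)^{\max\{\alpha,0\}}$ up to logarithmic factors. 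Multiplying by $(1+s)^{-2}$ and integrating, the hypothesis $\alpha<1$ (equivalently $\alpha-2<-1$) ensures convergence of the integral in all three regimes of Lemma \ref{lem2.1}; this is exactly where the assumption $\alpha<1$ is used, and it shows simultaneously that $\overrightarrow{v_{+}}\in\dot{H}^1\times L^2$ is well defined.

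Finally, with $\overrightarrow{v_{+}}$ so defined, the isometry of $\mathcal{W}(-t)$ yields
\begin{align*}
	\norm{\overrightarrow{v}(t) - \mathcal{W}(t)\overrightarrow{v_{+}}}_{\dot{H}^1\times L^2}
	= \norm{\mathcal{W}(-t)\overrightarrow{v}(t) - \overrightarrow{v_{+}}}_{\dot{H}^1\times L^2}
	= \norm{\int_{t}^{\infty} \mathcal{W}(-s) F(s,v(s))\, ds}_{\dot{H}^1\times L^2},
\end{align*}
and the same tail estimate bounds this by $\int_{t}^{\infty}|\mu|(1+s)^{-2}\norm{v(s)}_{L^2}\,ds$, which tends to $0$ as $t\to\infty$. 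The only genuine obstacle is the integrability of this Duhamel tail, which rests on the sharp $L^2$-growth rate of Lemma \ref{lem2.1} combined with $\alpha<1$; the remainder is bookkeeping with the isometry property. I note that carrying the tail bound explicitly through the three cases of Lemma \ref{lem2.1} also reproduces the quantitative rates asserted in Theorem \ref{thm1.0}.
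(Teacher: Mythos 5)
Your proposal is correct and follows essentially the same route as the paper: both apply $\mathcal{W}(-t)$ to the Duhamel formula \eqref{eq2.3}, use the energy conservation of the free wave group to reduce everything to the tail integral $\int (1+s)^{-2}\norm{v(s)}_{L^2}\,ds$, and invoke Lemma \ref{lem2.1} with $\alpha<1$ for its convergence. The only cosmetic difference is that you define $\overrightarrow{v_{+}}$ directly as an absolutely convergent improper integral, whereas the paper shows $\mathcal{W}(-t)\overrightarrow{v}(t)$ is Cauchy and appeals to completeness of $\dot{H}^1\times L^2$; these are interchangeable formulations of the same estimate.
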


\begin{proof}
We only treat the case $\mu\neq 1/4$ and $\alpha>0$ under the assumption of Theorem \ref{thm1.0}. In other cases, the statement also holds by a small modification.
For $0\leq \tau \leq t$, by Lemma \ref{lem2.1} and $\alpha<1$, we get
\begin{align*}
	&\norm{\mathcal{W}(-t)\overrightarrow{v}(t)- \mathcal{W}(-\tau)\overrightarrow{v}(\tau)}_{\dot{H}^1\times L^2}
	\\
	&\cleq \norm{ \int_{\tau}^{t} \mathcal{W}_1(-s) \frac{\mu}{(1+s)^2} v(s)ds}_{\dot{H}^1}
	+\norm{ \int_{\tau}^{t} \mathcal{W}_0(-s) \frac{\mu}{(1+s)^2} v(s)ds}_{L^2}
%	\\
%	& \quad +\norm{ \int_{\tau}^{t} \mathcal{W}_1(-s) \frac{ |v(s)|^{\frac{4}{d-2}} v(s) }{(1+s)^{\frac{2\mu_1}{d-2}}}ds}_{\dot{H}^1}
%	+\norm{ \int_{\tau}^{t} \mathcal{W}_0(-s) \frac{ |v(s)|^{\frac{4}{d-2}} v(s) }{(1+s)^{\frac{2\mu_1}{d-2}}}ds}_{L^2}
	\\
	&\cleq |\mu| \int_{\tau}^{t} (1+s)^{-2} \norm{v(s)}_{L^2} ds
	%+\int_{\tau}^{t} \frac{1}{(1+s)^{\frac{2\mu_1}{d-2}}}\norm{  |v(s)|^{\frac{4}{d-2}} v(s) }_{L^2}ds
	\\
	&\cleq |\mu| \int_{\tau}^{t} (1+s)^{-2+\alpha}  ds  \| (v_0,v_1) \|_{\Sigma^{r}} 
	\\
	&\to 0
\end{align*}
as $\tau \to \infty$. % and the right hand side does not appear when $\mu =0$. 
By the completeness of $\dot{H}^1\times L^2$, we get the limit $\overrightarrow{v_+} \in \dot{H}^1\times L^2$.
\end{proof}

We show Theorem \ref{thm1.0}. 

\begin{proof}[Proof of Theorem \ref{thm1.0}]
It holds from Lemma \ref{lem2.1} that
\begin{align*}
	&\norm{\overrightarrow{v}(t)- \mathcal{W}(t)\overrightarrow{v_+}}_{\dot{H}^1\times L^2}
	\cleq |\mu| \int_{t}^{\infty} (1+s)^{-2} \|v(s)\|_{L^2} ds
	\\
	&\cleq |\mu| C_{t_0} \| (v_0,v_1) \|_{\Sigma^{r}}
	\\
	&\quad \times
	\begin{cases}
	(1+t)^{-1}\{1+\log(1+t)\}^{\delta} & \text{ if } \alpha < 0, \text{ or } \alpha=0 \text{ and } r>1,
	\\
	(1+t)^{-1}\{1+\log(1+t)\}^{\frac{1}{2}+\delta} & \text{ if } \alpha = 0 \text{ and } r=1,
	\\
	(1+t)^{\alpha-1}\{1+\log(1+t)\}^{\delta} & \text{ if } \alpha \in( 0,1).
	\end{cases} 
\end{align*}
This completes the proof. 
\end{proof}

%Theorem \ref{thm1.1} can be obtained in the same way by using Lemma \ref{lem2.2}. And Corollaries \ref{cor1.0} and \ref{cor1.1} are obtained by the same method as in \cite[Corollary 1.2]{InMi20}. Thus, we omit the details. 

Next, we give a proof of Theorem \ref{thm1.5}.

\begin{proof}[Proof of Theorem \ref{thm1.5}]
We only consider the case $\alpha(1,\mu)>1$. 
Let $(v_0,v_1) \in H^1 \times L^2$. For arbitrary $\varepsilon>0$, there exists $(v_{0,\varepsilon},v_{1,\varepsilon}) \in \Sigma^{1}$ such that $\|(v_0,v_1) - (v_{0,\varepsilon},v_{1,\varepsilon})\|_{\dot{H}^1 \times L^2} < \varepsilon$. Then, by \cite[Corollary A.9]{InMi20} and Lemma \ref{lem2} we have
\begin{align*}
	&(1+t)^{\frac{1}{2}-\re\nu } \| \overrightarrow{v}(t)\|_{\dot{H}^1 \times L^2}
	\\
	& \quad \leq (1+t)^{\frac{1}{2}-\re\nu } \| \overrightarrow{v}(t) - \overrightarrow{v_{\varepsilon}}(t) \|_{\dot{H}^1 \times L^2}
	+(1+t)^{\frac{1}{2}-\re\nu } \| \overrightarrow{v_{\varepsilon}}(t)\|_{\dot{H}^1 \times L^2} 
	\\
	&\quad \cleq  \|(v_0,v_1) - (v_{0,\varepsilon},v_{1,\varepsilon})\|_{\dot{H}^1 \times L^2} 
	+ (1+t)^{\frac{1}{2}-\re\nu } (1+t)^{\alpha-1} \| (v_{0,\varepsilon},v_{1,\varepsilon}) \|_{\Sigma^{1}} 
	\\
	&\quad \cleq \varepsilon
	+ (1+t)^{-\frac{d}{2}} \| (v_{0,\varepsilon},v_{1,\varepsilon}) \|_{\Sigma^{1}},
\end{align*}
where $\overrightarrow{v}(t)=(v(t),\partial_t v(t))$ and $\overrightarrow{v_{\varepsilon}}(t)=(v_{\varepsilon}(t),\partial_t v_{\varepsilon}(t))$ denotes the  solution to \eqref{KG} with the initial data $(v_0,v_1)$ and $(v_{0,\varepsilon},v_{1,\varepsilon})$, respectively. Thus, we obtain
\begin{align*}
	\lim_{t \to \infty} (1+t)^{\frac{1}{2}-\re\nu } \| \overrightarrow{v}(t)\|_{\dot{H}^1 \times L^2}=0. 
\end{align*}
In the other case, $\| \overrightarrow{v_{\varepsilon}}(t)\|_{\dot{H}^1 \times L^2}$ is bounded or has a logarithmic growth and thus $(1+t)^{1/2-\re\nu } \| \overrightarrow{v_{\varepsilon}}(t)\|_{\dot{H}^1 \times L^2}  \to 0$ as $t\to \infty$ since $\re \nu > 1/2$ by $\mu<0$. We get \eqref{eq01}. 
 For the $L^2$-norm of $v$, the estimate \eqref{eq02} follows from the density argument, Corollary A.9 in \cite{InMi20} (see also \cite[Theorem]{BoRe12}), and Lemma \ref{lem2.1} above. By retransforming $u=(1+t)^{\mu_1/2}v$, we obtain the estimates for \eqref{DW}. This completes the proof. 
\end{proof}

\begin{proof}[Proof of Corollary \ref{cor1.6}]
We only treat the case of $\mu \neq1/4$. 
It holds from Theorem \ref{thm1.5} that, for any $\varepsilon>0$, there exists $T=T_{\varepsilon}>0$ such that $(1+t)^{-\frac{1}{2} -\re \nu} \| v(t) \|_{L^2} <\varepsilon$ for $t>T$.
As in \cite[Proof of Theorem 1.1]{InMi20}, we have 
\begin{align*}
	\norm{\overrightarrow{v}(t)- \mathcal{W}(t)\overrightarrow{v_+}}_{\dot{H}^1\times L^2}
	\cleq \varepsilon \mu \int_{t}^{\infty} (1+s)^{-\frac{3}{2}+\re \nu}  ds
	\cleq \varepsilon \mu (1+t)^{-\frac{1}{2}+\re\nu}
\end{align*}
for $t>T$.
This gives the desired result. We can also treat the case of $\mu=1/4$ in the same way. This completes the proof. 
\end{proof}

%%%%%%%%%%%%%%%%%%%%%%%%%%%%%%%%%%%%%%%%%%%%%%%%%%%%%%%%%
\appendix

\section{Non-zero state}
\label{appA}

%%\begin{conrmk}
%If $\mu<0$, then the solution $v$ to \eqref{KG} does not decay to zero in general. 
%Indeed, 
We define the energy $E$ of \eqref{KG} by 
\begin{align*}
	E(\overrightarrow{v}):= \frac{1}{2} \|\partial_t v\|_{L^2}^2 + \frac{1}{2} \|\nabla v\|_{L^2}^2 + \frac{\mu}{2(1+t)^2} \| v\|_{L^2}^2.
\end{align*}
Then we have the following proposition.

\begin{proposition}
\label{propA.1}
If $\mu<0$ and the initial data satisfies $E(\overrightarrow{v}(0))> 0$, then the solution $v$ does not decay to zero.
\end{proposition}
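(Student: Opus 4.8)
The plan is to exploit the monotonicity of the energy $E(\overrightarrow{v}(t))$ along the flow of \eqref{KG}. First I would differentiate $E$ in time. Using the equation $\partial_t^2 v = \Delta v - \frac{\mu}{(1+t)^2}v$ to replace $\partial_t^2 v$ in the term $(\partial_t v, \partial_t^2 v)_{L^2}$, and integrating by parts in the gradient term so that $(\nabla v, \nabla \partial_t v)_{L^2}$ cancels $(\partial_t v, \Delta v)_{L^2}$, one checks that the two cross terms $\mp \frac{\mu}{(1+t)^2}(\partial_t v, v)_{L^2}$ coming from the mass and from the differentiated potential cancel each other as well. What survives is the clean identity
\[
	\frac{d}{dt}E(\overrightarrow{v}(t)) = \frac{-\mu}{(1+t)^3}\norm{v(t)}_{L^2}^2 .
\]
Since $\mu<0$, the right-hand side is nonnegative, so $E$ is non-decreasing along the evolution; in particular $E(\overrightarrow{v}(t)) \geq E(\overrightarrow{v}(0)) > 0$ for every $t \geq 0$.

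Next I would read off a uniform lower bound on the energy-space norm. Because $\mu<0$, the mass term in $E$ is nonpositive, whence
\[
	\tfrac{1}{2}\norm{\overrightarrow{v}(t)}_{\dot{H}^1\times L^2}^2
	= \tfrac{1}{2}\norm{\partial_t v(t)}_{L^2}^2 + \tfrac{1}{2}\norm{\nabla v(t)}_{L^2}^2
	= E(\overrightarrow{v}(t)) + \frac{|\mu|}{2(1+t)^2}\norm{v(t)}_{L^2}^2
	\geq E(\overrightarrow{v}(0)) > 0 .
\]
Thus $\norm{\overrightarrow{v}(t)}_{\dot{H}^1\times L^2}^2 \geq 2E(\overrightarrow{v}(0))$ uniformly in $t$, which is exactly the statement that $v$ does not decay to zero in the energy space. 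Since $\mathcal{W}(t)$ is an isometry on $\dot{H}^1\times L^2$, this lower bound also forces the scattering state to satisfy $\overrightarrow{v}_{+}\neq 0$, as claimed in the remark following Theorem \ref{thm1.0}.

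The only point requiring care is the rigorous justification of the energy identity for data merely in $H^1\times L^2$, since the formal differentiation of $E$ and the integration by parts presuppose additional regularity. I would handle this by first establishing the identity for smooth, rapidly decaying initial data, where every manipulation is classical, and then passing to the limit using continuous dependence of $\overrightarrow{v}(t)$ on the data in $\dot{H}^1\times L^2$ together with the explicit Fourier representation \eqref{eq2.3.0} from Section \ref{sec2}; alternatively the identity can be derived directly on the Fourier side. This approximation step is the main obstacle, but it is routine: no genuine difficulty arises beyond it, and the entire conclusion rests on the favorable sign of $\frac{d}{dt}E$ when $\mu<0$.
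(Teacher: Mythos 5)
Your proof is correct and follows essentially the same route as the paper: the energy identity $\frac{d}{dt}E(\overrightarrow{v}(t)) = \frac{-\mu}{(1+t)^3}\norm{v(t)}_{L^2}^2$, the resulting monotonicity $E(\overrightarrow{v}(t)) \geq E(\overrightarrow{v}(0)) > 0$ for $\mu<0$, and the observation that the nonpositive mass term then yields the uniform lower bound $\frac{1}{2}\norm{\partial_t v(t)}_{L^2}^2 + \frac{1}{2}\norm{\nabla v(t)}_{L^2}^2 \geq E(\overrightarrow{v}(0))$. The extra remarks on justifying the identity for rough data and on $\overrightarrow{v}_{+}\neq 0$ are fine additions but not part of the paper's (more terse) argument.
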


\begin{proof}
By using the equation \eqref{KG}, we have
\begin{align*}
	\frac{d}{dt}E(\overrightarrow{v}(t)) = -\frac{\mu}{(1+t)^3} \|v(t)\|_{L^2}^2
\end{align*}
and thus 
\begin{align*}
	E(\overrightarrow{v}(t))= \int_{0}^{t}\frac{-\mu}{(1+s)^3} \|v(s)\|_{L^2}^2 ds +E(\overrightarrow{v}(0)).
\end{align*}
It holds from $\mu<0$ that
\begin{align*}
	\frac{1}{2} \|\partial_t v (t)\|_{L^2}^2 + \frac{1}{2} \|\nabla v(t)\|_{L^2}^2 
	\geq \frac{-\mu}{2(1+t)^2} \| v(t)\|_{L^2}^2+E(\overrightarrow{v}(0)) \geq E(\overrightarrow{v}(0)).
\end{align*}
Therefore, the $\dot{H}^1 \times L^2$-norm of the solution does not decay to zero.
%is bounded below by the positive constant $E(\overrightarrow{v}(0))$ for any time.%$v$ does not decay to zero.
\end{proof}

%\end{conrmk}

%%%%%%%%%%%%%%%%%%%%%%%%%%%%%%%%%%%%%%%%%%%%%%%%%%%%%%%%%

\section{Asymptotic order for negative Sovolev spaces}
\label{appB}

We define the function space $E^{(s)}$ 
by
\begin{align*}
%	\Sigma^{r}&:=(H^1(\mathbb{R}^d) \cap L^r(\mathbb{R}^d)) \times( L^2(\mathbb{R}^d) \cap L^r(\mathbb{R}^d)),
%	\\
	E^{(s)}&:= (H^1(\mathbb{R}^d) \cap \dot{H}^{-s} (\mathbb{R}^d)) \times (L^2(\mathbb{R}^d) \cap \dot{H}^{-s}(\mathbb{R}^d))
\end{align*}
for $s > 0$. 
Moreover, we set
\begin{align*}
	\gamma&=\gamma(s,\mu):=\frac{1}{2} + \re\nu -s,
%	\\
%	\gamma&= \gamma(\mu):=\frac{1}{2} + \re\nu,
%	\\
%	\delta & := 
%	\begin{cases}
%	1, & \mu =1/4,
%	\\
%	0, & \mu\neq 1/4,
%	\end{cases}
\end{align*}
where $\nu$ is defined in \eqref{eq1.1}. 

We have the following scattering and the asymptotic order for the initial data in $E^{(s)}$ with $s>\max\{-1/2+\re\nu,0\}$. 

\begin{theorem}
\label{thm1.1}
Let $d\in \mathbb{N}$, $s> \max\{-1/2+\re\nu,0\}$, and $v$ be the solution to \eqref{KG}. Assume $(v_0,v_1)\in E^{(s)}$.
Then, there exists $\overrightarrow{v_{+}} \in \dot{H}^1(\mathbb{R}^d) \times L^2(\mathbb{R}^d)$ such that
\begin{align*}
	&\norm{\overrightarrow{v}(t) - \mathcal{W}(t)\overrightarrow{v_{+}}}_{\dot{H}^1 \times L^2}
	\\
	& \quad \cleq |\mu| \norm{ (v_0,v_1) }_{E^{(s)}}
	\begin{cases}
	(1+t)^{-1}\{1+\log(1+t)\}^{\delta}& \text{ if } \gamma\leq 0,
	\\
	(1+t)^{-1+\gamma} \{1+\log(1+t)\}^{\delta} & \text{ if } \gamma>0,
	\end{cases}
\end{align*}
%as $t \to \infty$ for arbitrary small $\varepsilon>0$, 
where the implicit constant does not depend on time and the initial data $(v_0,v_1)$.
\end{theorem}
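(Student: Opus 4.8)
The plan is to mirror the proof of Theorem~\ref{thm1.0}, replacing the $\Sigma^r$-$L^2$ estimate of Lemma~\ref{lem2.1} by an analogous $E^{(s)}$-$L^2$ estimate adapted to negative Sobolev data. Concretely, the first (and only genuinely new) step is to prove that for $(v_0,v_1)\in E^{(s)}$ the solution $v$ of \eqref{eq2.1} satisfies
\begin{align*}
	\norm{v(t)}_{L^2} \cleq_{t_0} \norm{(v_0,v_1)}_{E^{(s)}}
	\begin{cases}
	\{1+\log(1+t)\}^{\delta} & \text{if } \gamma \leq 0,
	\\
	(1+t)^{\gamma}\{1+\log(1+t)\}^{\delta} & \text{if } \gamma > 0.
	\end{cases}
\end{align*}
Granting this, the conclusion follows verbatim from Section~3: using the Duhamel formula \eqref{eq2.3}, I would establish the Cauchy property
\begin{align*}
	\norm{\mathcal{W}(-t)\overrightarrow{v}(t)-\mathcal{W}(-\tau)\overrightarrow{v}(\tau)}_{\dot{H}^1\times L^2}
	\cleq |\mu|\int_{\tau}^{t}(1+s)^{-2}\norm{v(s)}_{L^2}\,ds \to 0
\end{align*}
as $\tau\to\infty$, exactly as in Lemma~\ref{lem3.1}, which yields the scattering state $\overrightarrow{v_+}$; integrability of the tail is guaranteed precisely by $\gamma<1$, which is the hypothesis $s>-1/2+\re\nu$. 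The asymptotic order is then obtained by integrating $\int_t^{\infty}(1+s)^{-2}\norm{v(s)}_{L^2}\,ds$: for $\gamma\le 0$ the $L^2$-norm is (log-)bounded, giving $(1+t)^{-1}\{1+\log(1+t)\}^{\delta}$, while for $\gamma\in(0,1)$ the factor $(1+s)^{\gamma}$ produces $(1+t)^{-1+\gamma}\{1+\log(1+t)\}^{\delta}$.

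To prove the $E^{(s)}$-$L^2$ estimate I would reuse the Fourier representation $\widehat{v}(t,\xi)=E_0\widehat{v_0}+E_1\widehat{v_1}$, the same partition $\R^d=Z_1\cup Z_2\cup Z_3$, and the $L^\infty$-bounds on $E_0,E_1$ from \cite{InMi20}. The essential change is how the data is absorbed: rather than Hölder plus Hausdorff--Young (which reduces to $L^r$-norms), I would factor out $|\xi|^{-s}$ and estimate the remaining power of $|\xi|$ in $L^\infty$ over each zone, bounding the residual $\norm{|\xi|^{-s}\widehat{v_j}}_{L^2}$ by $\norm{v_j}_{\dot{H}^{-s}}$. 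In the high-frequency zone $Z_1=\{N\le(1+t_0)|\xi|\}$ the weight $|\xi|^{-s}$ is useless since $|\xi|^{s}$ is unbounded, so I would instead use the $H^1\times L^2$ component of the norm; the bounded $L^\infty$-estimates of $E_0,E_1$ there give a contribution $\cleq_{t_0}\norm{v_0}_{L^2}+\norm{v_1}_{L^2}$, uniform in $t$. In the intermediate zone $Z_2=\{(1+t_0)|\xi|\le N\le(1+t)|\xi|\}$, writing $|\xi|^{-1/2-\re\nu}=|\xi|^{-\gamma}|\xi|^{-s}$ and using $\norm{|\xi|^{-\gamma}}_{L^\infty(Z_2)}\cleq(1+t)^{\gamma}$ for $\gamma>0$ and $\cleq_{t_0}1$ for $\gamma\le 0$ yields the stated growth; note that no logarithm appears at $\gamma=0$, in contrast to the $r=1$ case of Lemma~\ref{lem2.1}. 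In the low-frequency zone $Z_3=\{(1+t)|\xi|\le N\}$, factoring $\widehat{v_j}=|\xi|^{s}\,|\xi|^{-s}\widehat{v_j}$ with $\norm{|\xi|^{s}}_{L^\infty(Z_3)}\cleq(1+t)^{-s}$ (this is where $s>0$ enters) together with $|E_j|\cleq_{t_0}(1+t)^{1/2+\re\nu}$ gives $(1+t)^{1/2+\re\nu-s}=(1+t)^{\gamma}$.

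The main obstacle is confined to this $E^{(s)}$-$L^2$ estimate, and within it to two bookkeeping points: verifying that the weight split $|\xi|^{-1/2-\re\nu}=|\xi|^{-\gamma}|\xi|^{-s}$ makes the $L^\infty$-norms over $Z_2,Z_3$ produce exactly $(1+t)^{\gamma}$, and handling the high-frequency zone $Z_1$, where $\dot{H}^{-s}$-regularity is inert and one must fall back on the $H^1\times L^2$ part of $E^{(s)}$ (this is precisely why the space is $H^1\cap\dot{H}^{-s}$ rather than $\dot{H}^{-s}$ alone). Everything else transfers directly from the proofs of Theorem~\ref{thm1.0} and Lemma~\ref{lem3.1}. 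I would also emphasize that the clean $\gamma=0$ case, with no extra logarithmic factor, is a genuine advantage of this weighted $L^2$/$L^\infty$ argument over the Hölder argument used for $\Sigma^r$-data.
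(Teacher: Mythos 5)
Your proposal is correct and is essentially the paper's own proof: the paper first establishes exactly your $E^{(s)}$-$L^2$ estimate (Lemma \ref{lem2.2}) via the same zone decomposition $Z_1,Z_2,Z_3$, the same $L^\infty$ bounds on $E_0,E_1$ from \cite{InMi20}, the $H^1\times L^2$ part of the data on $Z_1$, and the weighted H\"older splits $|\xi|^{-\frac{1}{2}-\re\nu}=|\xi|^{-\gamma}\,|\xi|^{-s}$ on $Z_2$ and $\widehat{v_j}=|\xi|^{s}\,|\xi|^{-s}\widehat{v_j}$ on $Z_3$ (where $s>0$ enters). It then concludes just as you do, observing that $s>\max\{-1/2+\re\nu,0\}$ gives $\gamma<1$ and repeating the Cauchy-sequence and tail-integration arguments of Lemma \ref{lem3.1} and Theorem \ref{thm1.0}.
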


This also gives the following result for \eqref{DW} in the same way as Corollary \ref{cor1.0}. 

\begin{corollary}
\label{cor1.1}
Let $d\in \mathbb{N}$, $s> \max\{-1/2+\re\nu,0\}$, and $u$ be the solution of \eqref{DW}. Assume $(u_0,u_1)\in E^{(s)}$. 
 Then, there exists $\overrightarrow{v_{+}} \in \dot{H}^1(\mathbb{R}^d) \times L^2(\mathbb{R}^d)$ such that
\begin{align*}
	&\norm{\overrightarrow{u}(t) -(1+t)^{-\frac{\mu_1}{2}} (\mathcal{W}(t)\overrightarrow{v_{+}})_{1} }_{\dot{H}^1}
	\\
	& \quad \cleq |\mu| \norm{ (v_0,v_1) }_{E^{(s)}}
	\begin{cases}
	(1+t)^{-1- \frac{\mu_1}{2}}\{1+\log(1+t)\}^{\delta}& \text{ if } \gamma\leq 0,
	\\
	(1+t)^{-1- \frac{\mu_1}{2}+\gamma} \{1+\log(1+t)\}^{\delta} & \text{ if } \gamma>0,
	\end{cases}
%	
%	\cleq |\mu|\norm{ (v_0,v_1) }_{E^{(s)}} (1+t)^{-1 - \frac{\mu_1}{2}} 
\end{align*}
and 
\begin{align*}
	&\norm{\overrightarrow{u}(t) -(1+t)^{-\frac{\mu_1}{2}} (\mathcal{W}(t)\overrightarrow{v_{+}})_2 }_{L^2}
	\\
	&\cleq (|\mu| +|\mu_1|)\norm{ (v_0,v_1) }_{E^{(s)}}% (1+t)^{-1 - \frac{\mu_1}{2}}
	\begin{cases}
	(1+t)^{-1- \frac{\mu_1}{2}}\{1+\log(1+t)\}^{\delta}& \text{ if } \gamma\leq 0,
	\\
	(1+t)^{-1- \frac{\mu_1}{2}+\gamma} \{1+\log(1+t)\}^{\delta} & \text{ if } \gamma>0.
	\end{cases}
\end{align*}
\end{corollary}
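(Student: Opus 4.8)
The plan is to deduce Corollary \ref{cor1.1} from Theorem \ref{thm1.1} by the retransformation $u=(1+t)^{-\mu_1/2}v$, exactly as Corollary \ref{cor1.0} is obtained from Theorem \ref{thm1.0}. Recall that $(u_0,u_1)$ and $(v_0,v_1)=(u_0,\mu_1u_0/2+u_1)$ give rise to the same solution, that $u(t)=(1+t)^{-\mu_1/2}v(t)$, and consequently
\[
	\partial_t u(t)=(1+t)^{-\mu_1/2}\partial_t v(t)-\frac{\mu_1}{2}(1+t)^{-\mu_1/2-1}v(t).
\]
Throughout, $(\mathcal{W}(t)\overrightarrow{v_{+}})_1$ and $(\mathcal{W}(t)\overrightarrow{v_{+}})_2$ are the two components furnished by Theorem \ref{thm1.1}.

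The first ($\dot{H}^1$) estimate is immediate: since $u(t)-(1+t)^{-\mu_1/2}(\mathcal{W}(t)\overrightarrow{v_{+}})_1=(1+t)^{-\mu_1/2}(v(t)-(\mathcal{W}(t)\overrightarrow{v_{+}})_1)$, bounding the first component by the full vector norm yields
\[
	\norm{u(t)-(1+t)^{-\mu_1/2}(\mathcal{W}(t)\overrightarrow{v_{+}})_1}_{\dot{H}^1}\cleq (1+t)^{-\mu_1/2}\norm{\overrightarrow{v}(t)-\mathcal{W}(t)\overrightarrow{v_{+}}}_{\dot{H}^1\times L^2},
\]
and inserting Theorem \ref{thm1.1} and distributing $(1+t)^{-\mu_1/2}$ reproduces the claimed bound with coefficient $|\mu|$. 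For the second ($L^2$) estimate I would split, using the displayed identity for $\partial_t u$,
\[
	\partial_t u(t)-(1+t)^{-\mu_1/2}(\mathcal{W}(t)\overrightarrow{v_{+}})_2=(1+t)^{-\mu_1/2}\bigl(\partial_t v(t)-(\mathcal{W}(t)\overrightarrow{v_{+}})_2\bigr)-\frac{\mu_1}{2}(1+t)^{-\mu_1/2-1}v(t).
\]
The first term is again controlled by $(1+t)^{-\mu_1/2}\norm{\overrightarrow{v}(t)-\mathcal{W}(t)\overrightarrow{v_{+}}}_{\dot{H}^1\times L^2}$ through Theorem \ref{thm1.1}, supplying the $|\mu|$ part of the coefficient, while the genuinely new term $\frac{\mu_1}{2}(1+t)^{-\mu_1/2-1}v(t)$, produced by differentiating the time-weight, supplies the $|\mu_1|$ part.

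The main point is therefore the bound on this last term, for which I need the $E^{(s)}$-analogue of the plain $L^2$-estimate of Lemma \ref{lem2.1}, namely $\norm{v(t)}_{L^2}\cleq \norm{(v_0,v_1)}_{E^{(s)}}\{1+\log(1+t)\}^{\delta}$ when $\gamma\le 0$ and $\norm{v(t)}_{L^2}\cleq \norm{(v_0,v_1)}_{E^{(s)}}(1+t)^{\gamma}\{1+\log(1+t)\}^{\delta}$ when $\gamma>0$; multiplying by $(1+t)^{-\mu_1/2-1}$ then matches the stated time decay in both regimes, so that the two contributions combine into the factor $|\mu|+|\mu_1|$. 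I expect the only real work to be in establishing this $L^2$-estimate, which I would carry out by the same three-zone frequency decomposition $Z_1,Z_2,Z_3$ as in the proof of Lemma \ref{lem2.1}, replacing the Sobolev embedding $L^r\hookrightarrow\dot{H}^{-d(2-r)/(2r)}$ used there by the direct membership $v_0,v_1\in\dot{H}^{-s}$. Under the hypothesis $s>\max\{-1/2+\re\nu,0\}$ the low-frequency contribution from $Z_3$ converges, and because $\dot{H}^{-s}$ is used with its exact scaling exponent the endpoint logarithmic loss that appears for $r=1$ in Lemma \ref{lem2.1} is absent; this is exactly why the $\gamma\le 0$ regime carries only the factor $\{1+\log(1+t)\}^{\delta}$.
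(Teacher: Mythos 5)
Your proposal is correct and follows essentially the same route as the paper: Corollary \ref{cor1.1} is deduced from Theorem \ref{thm1.1} via the retransformation $u=(1+t)^{-\mu_1/2}v$, with the extra term $\frac{\mu_1}{2}(1+t)^{-\mu_1/2-1}v(t)$ arising in $\partial_t u$ controlled by an $E^{(s)}$-to-$L^2$ bound on $v$, which produces the $|\mu_1|$ contribution to the constant. The only difference is cosmetic: the $L^2$-estimate you propose to establish by the three-zone frequency decomposition is already recorded in the paper as Lemma \ref{lem2.2}, proved exactly as you describe.
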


\begin{remark}
Let $(v_0,v_1)$ belong to $\Sigma^r$, $r \in (1,2)$ and $\alpha=\alpha(r,\mu)<1$. Then, taking $s=d(2-r)/(2r)$, the assumptions in Theorem \ref{thm1.1} are satisfied by the Sobolev embedding $L^r \hookrightarrow \dot{H}^{-s}$. Thus, we get Theorem \ref{thm1.0} except for the case of $r=1$ by Theorem \ref{thm1.1}. 
\end{remark}

First, we show the following lemma to obtain Theorem \ref{thm1.1}. 

\begin{lemma}[$E^{(s)}$-$L^2$ estimate]
\label{lem2.2}
Let $s\geq 0$. 
The solution $v$ of \eqref{eq2.1} satisfies the following estimates.
\begin{align*}
	&\| v(t) \|_{L^2} \leq C_{t_0} \| (v_0,v_1) \|_{E^{(s)}}
	\begin{cases}
	\{1+\log(1+t)\}^{\delta}& \text{ if } \gamma\leq 0,
	\\
	(1+t)^{\gamma} \{1+\log(1+t)\}^{\delta} & \text{ if } \gamma>0,
	\end{cases}
\end{align*}
where the constant $C_{t_0}$ is independent of $t$. 
\end{lemma}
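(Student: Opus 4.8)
The plan is to follow the proof of Lemma~\ref{lem2.1} almost verbatim, the only structural change being that the H\"older and Hausdorff--Young step used there to pass to $L^r$-norms of the data is replaced by the direct identity $\norm{v_j}_{\dot H^{-s}}=\norm{|\xi|^{-s}\widehat{v_j}}_{L^2}$. Starting from $\widehat v(t,\xi)=E_0(t,t_0,\xi)\widehat{v_0}(\xi)+E_1(t,t_0,\xi)\widehat{v_1}(\xi)$, I would split $\R^d$ into the same three zones $Z_1,Z_2,Z_3$ and insert the $L^\infty$-bounds for $E_0,E_1$ from \cite[Appendix A.3]{InMi20}. As in Lemma~\ref{lem2.1}, I treat only $\mu\neq 1/4$ and recover the resonant case $\mu=1/4$, where the extra logarithm $\{1+\log(1+t)\}^{\delta}$ appears, by the modification of \cite[Appendix A.4]{InMi20}.

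In the high-frequency zone $Z_1$ the multipliers are uniformly bounded and $|\xi|^{-1}\cleq_{t_0}1$, so this contribution is $\cleq_{t_0}\norm{v_0}_{L^2}+\norm{v_1}_{L^2}\le\norm{(v_0,v_1)}_{E^{(s)}}$, uniformly in $t$; here only the $H^1\times L^2$ part of the norm enters. In the intermediate zone $Z_2$ the propagator bound produces the weight $|\xi|^{-\frac12-\re\nu}$, and the key algebraic fact is $\frac12+\re\nu=s+\gamma$, so that $|\xi|^{-\frac12-\re\nu}=|\xi|^{-\gamma}|\xi|^{-s}$. I would then extract $\sup_{Z_2}|\xi|^{-\gamma}$, which is a $t_0$-constant when $\gamma\le0$ and is $\cleq(1+t)^{\gamma}$ when $\gamma>0$ (using $|\xi|\ge N/(1+t)$ on $Z_2$), leaving $\norm{|\xi|^{-s}\widehat{v_j}}_{L^2(Z_2)}\le\norm{v_j}_{\dot H^{-s}}$. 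This is exactly the place where measuring the data in $\dot H^{-s}$ instead of $L^r$ is advantageous: at the critical value $\gamma=0$ one simply keeps the $\dot H^{-s}$-norm with no loss, which is why the statement carries no counterpart of the $\{\log(1+t)\}^{1/2}$ factor that Lemma~\ref{lem2.1} produces at $\alpha=0$, $r=1$.

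In the low-frequency zone $Z_3$, where $|\xi|\le N/(1+t)$, both multipliers obey $|E_j|\cleq_{t_0}(1+t)^{\frac12+\re\nu}$ (the apparent $|\xi|^{-1}$ in the bound for $E_1$ being absorbed by the $\sin$-type cancellation of the wave part at low frequency). Writing $\widehat{v_j}=|\xi|^{s}\,|\xi|^{-s}\widehat{v_j}$ and using $\sup_{Z_3}|\xi|^{s}\cleq(1+t)^{-s}$ gives $\norm{\widehat{v_j}}_{L^2(Z_3)}\cleq(1+t)^{-s}\norm{v_j}_{\dot H^{-s}}$, so that $Z_3$ contributes $(1+t)^{\frac12+\re\nu-s}=(1+t)^{\gamma}$. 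Summing the three zones yields the total bound $(1+t)^{\max\{0,\gamma\}}$ times the $\delta$-logarithm, which is the claim.

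The step that needs the most care is the $v_1$-contribution in $Z_3$. If one inserts the $|\xi|^{-1}$ weight from the $E_1$-bound naively and then trades $|\xi|^{s-1}$ for a power of $(1+t)$ via the localization $|\xi|\le N/(1+t)$, one over-counts by a full factor of $(1+t)$ and arrives at the wrong exponent $(1+t)^{\gamma+1}$; the sharp bounds of \cite[Lemma A.8 and Appendix A.3]{InMi20} must therefore be used in the form $|E_1|\cleq_{t_0}(1+t)^{\frac12+\re\nu}$, reflecting that the low-frequency wave propagator behaves like $\min\{1+t,|\xi|^{-1}\}\sim 1+t$ rather than like $|\xi|^{-1}$ on $Z_3$. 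Once this is pinned down, the remaining bookkeeping --- in particular verifying that no logarithm is generated outside the resonance $\mu=1/4$ --- is routine.
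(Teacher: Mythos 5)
Your proposal follows essentially the same argument as the paper's own proof: the same decomposition into $Z_1,Z_2,Z_3$ with the $L^\infty$ multiplier bounds of \cite[Appendix A.3]{InMi20}, the H\"older/Hausdorff--Young step of Lemma \ref{lem2.1} replaced by extracting $\sup_{Z_2}|\xi|^{-\gamma}$ (resp.\ $\sup_{Z_3}|\xi|^{s}$) against the $\dot H^{-s}$-norm of the data, and the resonant case $\mu=1/4$ recovered via \cite[Appendix A.4]{InMi20}. Your cautionary point that on $Z_3$ one must use the bound $|E_1|\lesssim_{t_0}(1+t)^{\frac12+\re\nu}$ without a $|\xi|^{-1}$ weight is exactly how the paper's Case 3 proceeds, so the proposal is correct and matches the paper.
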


\begin{proof}
By the same method as in the proof of Lemma \ref{lem2.1}, we obtain the result. However, we give a sketch of the proof. We only treat the case of $\mu\neq 1/4$ and $\gamma>0$. In other cases, we get the statements by small modifications.
%We set regions $Z_1$, $Z_2$, and $Z_3$ as in the proof of Lemma \ref{lem2.1}.

\noindent{\bf Case 1. Estimate in $Z_1$.} This can be estimated in the similar way to Case 1 of the proof of Lemma \ref{lem2.1}.
%By using the $L^\infty$-estimates of $E_{0}$ and $E_{1}$ in \cite[]{}, we get
%\begin{align*}
%	&\| E_{0} (t,t_0,\xi) \widehat{v_0} \|_{L^2(Z_1)} + \| E_{1} (t,t_0,\xi) \widehat{v_1} \|_{L^2(Z_1)}
%	\\
%	&\cleq \| \widehat{v_0} \|_{L^2(Z_1)} + \| |\xi|^{-1}\widehat{v_1}\|_{L^2(Z_1)}
%	\\
%	&\cleq \| \widehat{v_0} \|_{L^2(Z_1)} +  (1+t_0)\|\widehat{v_1}\|_{L^2(Z_1)}.
%\end{align*}

\noindent{\bf Case 2. Estimate in $Z_2$.} By the $L^\infty$-estimates of $E_{0}$ and $E_{1}$ in \cite[Appendix A.3, Case 1-2]{InMi20} and the H\"{o}lder inequality, we get
\begin{align*}
	&\| E_{0} (t,t_0,\xi) \widehat{v_0} \|_{L^2(Z_2)} + \| E_{1} (t,t_0,\xi) \widehat{v_1} \|_{L^2(Z_2)}
	\\
	&\cleq \| |\xi|^{-\frac{1}{2}-\re\nu }\widehat{v_0} \|_{L^2(Z_2)} +(1+t_0)^{\frac{1}{2}-\re\nu} \| |\xi|^{-\frac{1}{2}-\re\nu }\widehat{v_1}\|_{L^2(Z_2)}
	\\
	&\cleq_{t_0} (1+t)^{\gamma} (\| v_0 \|_{\dot{H}^{-s}} +\|  v_1 \|_{\dot{H}^{-s}}).
\end{align*}

\noindent{\bf Case 3. Estimate in $Z_3$.} By the $L^\infty$-estimates of $E_{0}$ and $E_{1}$ in \cite[Appendix A.3, Case 1-3]{InMi20} and the H\"{o}lder inequality, we have
\begin{align*}
	&\| E_{0} (t,t_0,\xi) \widehat{v_0} \|_{L^2(Z_3)} + \| E_{1} (t,t_0,\xi) \widehat{v_1} \|_{L^2(Z_3)}
	\\
	&\cleq (1+t)^{\frac{1}{2}+\re\nu} (\| \widehat{v_0} \|_{L^2(Z_3)} + (1+t_0)^{\frac{1}{2}-\re\nu}\|\widehat{v_1}\|_{L^2(Z_3)})
	\\
	&\cleq_{t_0}(1+t)^{\frac{1}{2}+\re\nu}  \||\xi|^{s}\|_{L^{\infty}(Z_3)} (\|  v_0 \|_{\dot{H}^{-s}} +\|v_1\|_{\dot{H}^{-s}})
	\\
	&\cleq_{t_0}(1+t)^{\gamma} (\|  v_0 \|_{\dot{H}^{-s}} +\| v_1 \|_{\dot{H}^{-s}})
\end{align*}

Combining these estimates, we get the statement. When $\mu =1/4$, we get the logarithmic term by a modification (see \cite[Appendix A.4]{InMi20}).
\end{proof}

\begin{proof}[Proof of Theorem \ref{thm1.1}]
By the assumption $s> \max\{-1/2+\re\nu,0\}$, we have $\gamma<1$. Therefore, in the same way as the proofs of Lemma \ref{lem3.1} and Theorem \ref{thm1.0}, we get the desired statement. 
\end{proof}

%%%%%%%%%%%%%%%%%%%%%%%%%%%%%%%%%%%%%%%%%%%%%%%%%%%%%%%%

\begin{acknowledgement}
The first author is supported by by JSPS KAKENHI Grant-in-Aid for Early-Career Scientists JP18K13444 and the second author is supported by JSPS KAKENHI  Grant-in-Aid for Young Scientists (B) JP17K14218 and, partially, for Scientific Research (B) JP17H02854. 
\end{acknowledgement}

%%%%%References%%%%%

\end{document}